\documentclass[11pt]{amsart}
\usepackage{amssymb, latexsym}
\theoremstyle{plain}
\newtheorem{theorem}{Theorem}

\newtheorem*{thm-cheb}{Theorem (Chebyshev)}

\newtheorem{proposition}{Proposition}

\newtheorem*{2'}{Theorem 2'}
\newtheorem*{3'}{Theorem 3'}

\theoremstyle{remark}

\newtheorem*{Remark 1}{Remark 1}
\newtheorem*{Remark 2}{Remark 2}
\newtheorem*{Remark 3}{Remark 3}
\newtheorem*{Remark 4}{Remark 4}

\numberwithin{equation}{section}

\begin{document}

\title[A Natural Probabilistic Model on the Integers]
 {A Natural Probabilistic Model on the Integers and its Relation to  Dickman-Type Distributions and Buchstab's Function}

\author{Ross G. Pinsky}

%\noindent  pinsky@math.technion.ac.il\ \ \ \ tel: 972-4-829-4083\ \ \  fax: 972-4-829-3388

\address{Department of Mathematics\\
Technion---Israel Institute of Technology\\
Haifa, 32000\\ Israel}
\email{ pinsky@math.technion.ac.il}

\urladdr{http://www.math.technion.ac.il/~pinsky/}

\subjclass[2000]{60F05,11N25, 11K65} \keywords{Dickman function, Dickman-type distribution, Buchstab function, prime number, $k$-free numbers }
\date{}

\begin{abstract}
Let $\{p_j\}_{j=1}^\infty$ denote the set of prime numbers in increasing order, let $\Omega_N\subset \mathbb{N}$ denote the set of positive integers with no prime factor larger than $p_N$ and
let $P_N$ denote the probability  measure  on $\Omega_N$ which gives to each $n\in\Omega_N$ a probability proportional to $\frac1n$.
This measure is in fact the distribution of the random integer $I_N\in\Omega_N$ defined by $I_N=\prod_{j=1}^Np_j^{X_{p_j}}$, where
$\{X_{p_j}\}_{j=1}^\infty$ are independent random variables and $X_{p_j}$ is distributed as Geom$(1-\frac1{p_j})$.
We show that $\frac{\log n}{\log N}$ under $P_N$ converges weakly to the Dickman distribution. As a corollary, we recover a classical result from  multiplicative number theory---Mertens'
formula.
Let $D_{\text{nat}}(A)$ denote the natural density of $A\subset\mathbb{N}$, if it exists, and let $D_{\text{log-indep}}(A)=\lim_{N\to\infty}P_N(A\cap\Omega_N)$ denote the
density of $A$ arising from $\{P_N\}_{N=1}^\infty$, if it exists. We show that the two densities coincide on a natural algebra of subsets of $\mathbb{N}$.
We also show that they do not agree on the sets of $n^\frac1s$-\it smooth numbers \rm\ $\{n\in\mathbb{N}: p^+(n)\le n^\frac1s\}$, $s>1$, where $p^+(n)$ denotes the largest prime divisor of $n$.
This last  consideration concerns  distributions involving the Dickman function.
We also consider the
sets of  $n^\frac1s$-\it rough numbers  \rm\ $\{n\in\mathbb{N}:p^-(n)\ge n^{\frac1s}\}$, $s>1$,  where $p^-(n)$ denotes the smallest prime divisor of $n$.
We show that the probabilities of these sets, under
the uniform distribution on $[N]=\{1,\ldots, N\}$ and under the $P_N$-distribution on $\Omega_N$, have the  same
asymptotic decay profile as functions of $s$, although their rates are necessarily different. This profile involves  the Buchstab function. We also prove a new representation for the Buchstab function.

\end{abstract}

\maketitle

\section{Introduction and Statement of  Results}
%For $n\in\mathbb{N}$, we use the standard notation $[n]=\{1,\ldots,n\}$.
For a subset $A\subset\mathbb{N}$, the  natural density $D_{\text{nat}}(A)$ of $A$ is defined by
$D_{\text{nat}}(A)=\lim_{N\to\infty}\frac{|A\cap [N]|}N$, whenever this limit exists,
where $[N]=\{1,\ldots,N\}$.
The natural density is  additive, but  not $\sigma$-additive, and therefore  not a measure.
For each prime $p$ and each $n\in\mathbb{N}$, define  the nonnegative integer $\beta_p(n)$, the $p$-adic order of $n$, by $\beta_p(n)=m$, if $p^m\mid n$ and $p^{m+1}\nmid n$. Let  $\delta_p(n)=\max(1,\beta_p(n))$ denote
the indicator function of the set of positive integers divisible by $p$.
It is clear that for each $m\in\mathbb{N}$, the natural density of the set $\{n\in \mathbb{N}: \beta_p(n)\ge m\}$  of natural numbers divisible by $p^m$ is $(\frac1p)^m$.
More generally, it is easy to see that for $l\in\mathbb{N}$,  $\{m_j\}_{j=1}^l\subset\mathbb{N}$ and distinct primes $\{p_j\}_{j=1}^l$,
the natural density of the set $\{n\in\mathbb{N}:\beta_{p_j}(n)\ge m_j, j=1,\ldots, l\}$ is $\prod_{j=1}^l(\frac1{p_j})^{m_j}$.
That is,  the distribution of the random vector $\{\delta_{p_j}\}_{j=1}^l$, defined on the probability space $[N]$ with the uniform distribution,
converges weakly as $N\to\infty$ to the random vector $\{Y_{p_j}\}_{j=1}^l$ with independent components distributed according to the Bernoulli distributions
 $\{\text{Ber}(\frac1{p_j})\}_{j=1}^l$,
and the distribution of the random vector $\{\beta_{p_j}\}_{j=1}^l$
converges weakly as $N\to\infty$ to the random vector $\{X_{p_j}\}_{j=1}^l$ with independent components distributed according to  the geometric distributions Geom$(1-\frac1{p_j})$ \big($P(X_{p_j}=m)=(\frac1{p_j})^m(1-\frac1{p_j})$, $m=0,1,\ldots$\big).
This  fact is the starting point of probabilistic number theory.
%, as we describe in the nextseveral paragraphs.

Denote the primes in increasing order by $\{p_j\}_{j=1}^\infty$.
In the sequel, we will assume that the random variables $\{X_{p_j}\}_{j=1}^\infty, \{Y_{p_j}\}_{j=1}^\infty$ with distributions as above are defined as independent random variables on some probability space,
and we will use the generic notation $P$ to denote probabilities corresponding to these
random variables.

A real-valued function $f$ defined on $\mathbb{N}$ is called a real arithmetic function.
It is called \it additive\rm\ if $f(nm)=f(n)+f(m)$, whenever $(m,n)=1$. If in addition, $f(p^m)=f(p)$, for all primes $p$ and all $m\ge2$, then it is called \it strongly additive\rm.
Classical examples of  additive arithmetic functions are, for example, $\log \frac{\phi(n)}n$, where $\phi$ is the Euler totient function,
 $\omega(n)$, the number of distinct prime divisors of $n$,   $\Omega(n)$, the number of prime divisors of $n$ counting multiplicities and $\log \sigma(n)$, where  $\sigma$ is the sum-of-divisors function. The first two of these functions are
 strongly additive while the last two are not.

If $f$ is additive, then $f(1)=0$.
Writing $n\in\mathbb{N}$ as $n=\prod_{j=1}^\infty p_j^{\beta_{p_j}(n)}$, we have for $f$ additive,
$f(n)=\sum_{j=1}^\infty f(p_j^{\beta_{p_j}(n)})$,
and for $f$ strongly additive,
$f(n)=\sum_{j=1}^\infty f(p_j^{\delta_{p_j}(n)})=\sum_{j=1}^\infty f(p_j)\delta_{p_j}(n)$.
Equivalently, for each $N\in\mathbb{N}$, we have for $f$ additive,
\begin{equation}\label{likeindepsumadd}
f(n)=\sum_{j=1}^N f(p_j^{\beta_{p_j}(n)}), \ n\in[N],
\end{equation}
and for $f$  strongly additive,
\begin{equation}\label{likeindepsumstrongadd}
f(n)=\sum_{j=1}^N f(p_j)\delta_{p_j}(n),\ n\in[N].
\end{equation}
In light of the above discussion, it is natural to compare
\eqref{likeindepsumadd} to
\begin{equation}\label{indepsumadd}
\mathcal{X}_N\equiv\sum_{j=1}^Nf(p_j^{X_{p_j}}),
\end{equation}
and to compare \eqref{likeindepsumstrongadd} to
\begin{equation}\label{indepsumstrongadd}
\mathcal{Y}_N\equiv\sum_{j=1}^Nf(p_j)Y_{p_j}.
\end{equation}

Now $\mathcal{Y}_N$ converges in distribution as $N\to\infty$ if and only if
it converges almost surely, and the almost sure convergence of $\mathcal{Y}_N$ is characterized
by the Kolmogorov three series theorem \cite{Dur}. Since $EY_{p_j}=EY_{p_j}^2=\frac1{p_j}$, it follows from that theorem that $\mathcal{Y_N}$ converges
almost surely if and only if the following three series converge:
1. $\sum_{j:|f(p_j)|\le1}\frac{f(p_j)}{p_j}$; 2. $\sum_{j:|f(p_j)|\le1}\frac{f^2(p_j)}{p_j}$; 3.
$\sum_{j:|f(p_j)|>1}\frac1{p_j}$.
Since $P(X_{p_j}\ge2)=\frac1{p_j^2}$, it follows from the Borel-Cantelli lemma
that $\sum_{j=1}^\infty 1_{\{X_{p_j}\ge2\}}$ is almost surely finite; thus the very same criterion
also determines whether $\mathcal{X}_N$ converges almost surely.
The Erd\"os-Wintner theorem \cite{EW} states that for additive $f$,
the converges of these three series is  a necessary
and sufficient condition for the convergence in  distribution as $N\to\infty$ of the random variable
$f(n)$ in \eqref{likeindepsumadd}  on the probability space $[N]$ with the uniform distribution.
In the same spirit,  the Kac-Erd\"os theorem \cite{EK} states that if  $f$ is strongly additive and bounded,  then a central limit theorem holds as $N\to\infty$ for
$f(n)$ on the probability space $[N]$ with  the uniform distribution, if the conditions of the Feller-Lindeberg central limit theorem
hold for $\mathcal{Y}_N$. An appropriate corresponding result can be stated for additive $f$ and or unbounded $f$.  There is also a weak law of large numbers result, which in the case
of $f=\omega$ goes by the name of the Hardy-Ramanujan theorem \cite{HR}.
It should be noted that the original proof of Hardy and Ramanujan was quite complicated
and not at all probabilistic; however, the later and much  simpler proof of Turan \cite{Tur} has a strong
probabilistic flavor. For a concise and very readable probabilistic  approach to these results, see Billingsley \cite{Bill}; for a more encyclopedic probabilistic approach, see Elliott \cite{Ell1, Ell2}; for a less probabilistic
approach, see Tenenbaum \cite{Tene}.

Turan's paper with the proof of the Hardy-Ramanujan theorem, as well as
 the Erd\"os-Wintner theorem and several papers leading up to it, all appeared in the 1930's, and
 the Kac-Erd\"os theorem appeared in 1940. Now large deviations for independent and non-identically distributed  random variables have been readily available
 since the 1970's, thus  this author  certainly finds it quite surprising that until very recently no one  extended
 the parallel between \eqref{likeindepsumstrongadd} and \eqref{indepsumstrongadd},
 or \eqref{likeindepsumadd} and \eqref{indepsumadd}, to study the large deviations
 of \eqref{likeindepsumstrongadd} or \eqref{likeindepsumadd}!
See \cite{MZ1, MZ2}.

Another density that is sometimes used in number theory is the \it logarithmic density\rm, $D_{\text{log}}$, which is defined
by
\begin{equation}\label{logden}
D_{\text{log}}(A)=\lim_{N\to\infty}\frac1{\log N}\sum_{n\in A\cap[N]}\frac1n,
\end{equation}
 for  $A\subset \mathbb{N}$, whenever this limit exists.
Using summation by parts, it is easy to show that if $D_{\text{nat}}(A)$ exists, then $D_{\text{log}}(A)$ exists and coincides with
$D_{\text{nat}}(A)$ \cite{Tene}. (On the other hand, there are sets without natural density for which the logarithmic density exists. The most prominent of these are the sets $\{B_d\}_{d=1}^9$  associated with Benford's law, where $B_d$ is the set of positive integers whose first digit is $d$. One has
$D_{\text{log}}(B_d)=\log_{10} (1+\frac1d)$.)
Thus, also on the probability space $[N]$ with the probability measure which gives to each integer
$n$ a    measure proportional to $\frac1n$,
the distribution of the
random vector $\{\beta_{p_j}\}_{j=1}^l$
converges weakly as $N\to\infty$ to the random vector $\{X_{p_j}\}_{j=1}^l$ with independent components distributed according to  the geometric distributions Geom$(1-\frac1{p_j})$.

Motivated by the background described above, in this paper we consider a sequence of probability measures on $\mathbb{N}$ which may be thought
of as a  synthesis between the the logarithmic density $D_{\text{log}}$ and
the concept of approximating the natural density  via a sequence of independent random variables.
Let us denote by
$$
\Omega_N=\{n\in\mathbb{N}: p_j\nmid n, j>N\}
$$
the set of positive integers with no prime divisor larger than $p_N$.
By the Euler product formula,
\begin{equation}\label{CN}
C_N\equiv\sum_{n\in \Omega_N}\frac1n=\prod_{j=1}^N(1-\frac1{p_j})^{-1}<\infty.
\end{equation}
Let $P_N$ denote the probability measure  on $\Omega_N$  for which the probability
of $n$ is proportional to $\frac1n$; namely,
\begin{equation}\label{PN}
P_N(\{n\})=\frac1{C_N}\frac1n,\ n\in\Omega_N.
\end{equation}
The connection between $P_N$ and the logarithmic density is clear; the connection
between $P_N$ and  a sequence of independent random variables comes from the  following proposition.
Define a random positive integer $I_N\in\Omega_N$ by
$$
I_N=\prod_{j=1}^Np_j^{X_{p_j}}.
$$
\begin{proposition}\label{IN}
The distribution of $I_N$ is $P_N$; that is,
$$
P_N(\{n\})=P(I_N=n), \ n\in\Omega_N.
$$
\end{proposition}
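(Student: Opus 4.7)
The plan is a direct computation exploiting unique prime factorization and independence, so there is essentially no obstacle here; I would just have to present the calculation cleanly.

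First I would observe that since $\Omega_N$ consists precisely of the positive integers whose prime divisors all lie in $\{p_1,\dots,p_N\}$, every $n \in \Omega_N$ has a unique representation $n = \prod_{j=1}^N p_j^{m_j}$ with $m_j \in \{0,1,2,\dots\}$. Consequently, the event $\{I_N = n\}$ is identical to the event $\bigcap_{j=1}^N\{X_{p_j} = m_j\}$, where the $m_j$ are determined by $n$.

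Next, I would invoke independence of the $X_{p_j}$ and their $\text{Geom}(1 - 1/p_j)$ distribution to write
\[
P(I_N = n) \;=\; \prod_{j=1}^N P(X_{p_j} = m_j) \;=\; \prod_{j=1}^N \left(\frac{1}{p_j}\right)^{m_j}\!\left(1 - \frac{1}{p_j}\right).
\]
Since $\prod_{j=1}^N p_j^{-m_j} = 1/n$, this collapses to
\[
P(I_N = n) \;=\; \frac{1}{n}\prod_{j=1}^N\left(1 - \frac{1}{p_j}\right).
\]

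Finally, I would appeal to the Euler product formula \eqref{CN}, which gives $\prod_{j=1}^N(1 - 1/p_j) = 1/C_N$. Substituting yields $P(I_N = n) = \frac{1}{C_N n} = P_N(\{n\})$, as required. As a sanity check, summing over $n \in \Omega_N$ recovers $\sum_{n \in \Omega_N} 1/(C_N n) = 1$, consistent with $I_N$ being $\Omega_N$-valued. The only subtlety worth flagging is making explicit that the map $n \mapsto (m_1,\dots,m_N)$ is a bijection between $\Omega_N$ and $\mathbb{Z}_{\geq 0}^N$, which is what legitimizes translating the single event $\{I_N = n\}$ into a product event.
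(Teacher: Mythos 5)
Your proposal is correct and follows essentially the same route as the paper: identify $\{I_N=n\}$ with the event $\bigcap_{j=1}^N\{X_{p_j}=m_j\}$ via unique factorization, use independence and the geometric probabilities to get $\frac1n\prod_{j=1}^N(1-\frac1{p_j})$, and recognize this as $\frac1{C_N n}=P_N(\{n\})$ by \eqref{CN}. Your explicit remark about the bijection $n\mapsto(m_1,\dots,m_N)$ is a fine (if tacit in the paper) clarification, but the argument is the same.
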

\begin{proof}
Let $n=\prod_{j=1}^Np_j^{a_j}\in\Omega_N$. We have
$$
P(I_N=n)=\prod_{j=1}^NP(X_{p_j}=a_j)=\prod_{j=1}^N(\frac1{p_j})^{a_j}(1-\frac1{p_j})=\frac1{C_N}\frac1n=P_N(\{n\}).
$$
\end{proof}

Let $D_{\text{log-indep}}$ denote the asymptotic density obtained from $P_N$:
$$
D_{\text{log-indep}}(A)=\lim_{N\to\infty}P_N(A\cap\Omega_N)=\lim_{N\to\infty}\frac1{C_N}\sum_{n\in A\cap\Omega_N}\frac1n,
$$
for $A\subset\mathbb{N}$, whenever the limit exists.
Note that the weight functions used in calculating
the asymptotic densities $D_{\text{log-indep}}$ and $D_{\text{log}}$ have the same profile, but the sequences
of subsets of $\mathbb{N}$  over which the limits are taken, namely $\{\Omega_N\}_{N=1}^\infty$
and $\{[N]\}_{N=1}^\infty$, are different.
As already noted, when $D_{\text{nat}}(A)$ exists, so does
$D_{\text{log}}(A)$ and they coincide.
We will  show below in Proposition \ref{nat-log-indep}  that the densities $D_{\text{log-indep}}$ and $D_{\text{nat}}$ coincide on  many natural subsets of $\mathbb{N}$.
However we will also show below in Theorem \ref{DickmanforPN} that they disagree on certain important, fundamental subsets of $\mathbb{N}$.

For $k\ge2$, a positive integer $n$ is called \it $k$-free\rm\ if $p^k\nmid n$, for all primes $p$. When $k=2$, one uses the term \it square-free\rm.
Let $S_k$ denote the set of all $k$-free positive integers.
Let
$$
\Omega_N^{(k)}=\Omega_N\cap S_k.
$$
Note that $\Omega_N^{(k)}$ is a finite set; it has $k^N$ elements.
The measure $P_N$ behaves nicely under conditioning on $S_k$. For $k\ge2$, define the measure $P_N^{(k)}$ by
$$
P_N^{(k)}(\cdot)=P_N(~\cdot~|S_k).
$$
Let $\{X^{(k)}_{p_j}\}_{j=1}^\infty$ be independent random variables  with $X^{(k)}_{p_j}$ distributed as $X_{p_j}$ conditioned on $\{X_{p_j}<k\}$.
(Assume that these new random variables are defined on the same space as the $\{X_{p_j}\}_{j=1}^\infty$ so that we can still use $P$ for probabilities.)
Let
$$
I_N^{(k)}=\prod_{j=1}^Np_j^{X_{p_j}^{(k)}}.
$$
%$I_N^{(k)}\stackrel{dist}{=}I_N|\{X_{p_j}<k,\ j\in[N]\}$.
\begin{proposition}\label{conddist}
The distribution of $I_N^{(k)}$ is $P_N^{(k)}$.
\end{proposition}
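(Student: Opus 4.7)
The plan is to derive Proposition \ref{conddist} as a direct corollary of Proposition \ref{IN} via a conditioning argument, with a secondary bare-hands option that parallels the proof of Proposition \ref{IN} exactly. The key observation is that the event $\{I_N \in S_k\}$ coincides with the product event $\bigcap_{j=1}^N \{X_{p_j} < k\}$: a number $n = \prod_{j=1}^N p_j^{a_j} \in \Omega_N$ is $k$-free iff $a_j < k$ for every $j$. Since $\{X_{p_j}\}_{j=1}^N$ are independent, conditioning on a product event preserves independence and simply truncates each marginal, so under this conditioning $\{X_{p_j}\}_{j=1}^N$ has the same joint law as $\{X_{p_j}^{(k)}\}_{j=1}^N$.

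Granting this, the argument would proceed as follows. By Proposition \ref{IN} and the definition of $P_N^{(k)}$, for any $n\in\Omega_N^{(k)}$,
$$
P\bigl(I_N = n \,\big|\, I_N \in S_k\bigr) \;=\; \frac{P_N(\{n\})}{P_N(S_k)} \;=\; P_N^{(k)}(\{n\}).
$$
On the other hand, the independence-preservation under conditioning identifies the left-hand side with $P(I_N^{(k)} = n)$, since $I_N^{(k)} = \prod_{j=1}^N p_j^{X_{p_j}^{(k)}}$ has the same distribution as $\prod_{j=1}^N p_j^{X_{p_j}}$ conditioned on all $X_{p_j} < k$. This completes the proof.

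As a sanity check — and in case one prefers a self-contained computation mirroring Proposition \ref{IN} — one can write, for $n = \prod_{j=1}^N p_j^{a_j}\in\Omega_N^{(k)}$ with $0\le a_j<k$,
$$
P(I_N^{(k)} = n) \;=\; \prod_{j=1}^N \frac{(1/p_j)^{a_j}(1-1/p_j)}{1 - p_j^{-k}} \;=\; \frac{1}{n}\prod_{j=1}^N \frac{1-1/p_j}{1-p_j^{-k}},
$$
and then use the Euler-type factorization $\sum_{m\in\Omega_N^{(k)}} \frac{1}{m} = \prod_{j=1}^N \frac{1-p_j^{-k}}{1-p_j^{-1}}$ together with \eqref{CN} to evaluate $P_N(S_k) = \frac{1}{C_N}\prod_{j=1}^N \frac{1-p_j^{-k}}{1-p_j^{-1}}$; this makes $P_N^{(k)}(\{n\}) = \frac{1}{n}\prod_{j=1}^N \frac{1-1/p_j}{1-p_j^{-k}}$, matching the above.

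No serious obstacle is anticipated; the proof is essentially bookkeeping. The only point worth stating explicitly is the independence-preservation under conditioning on a product event, which is what lets the conditional version of Proposition \ref{IN} be read off without recomputation and which underlies the very definition of $X_{p_j}^{(k)}$ being given as independent truncated geometrics.
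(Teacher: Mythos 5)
Your proposal is correct and is essentially the paper's own argument: the paper likewise writes $P_N^{(k)}(\{n\})=P_N(\{n\}|S_k)=P(I_N=n\mid X_{p_j}<k,\ j\in[N])=P(I_N^{(k)}=n)$, using Proposition \ref{IN} for the middle equality and the fact that conditioning the independent $X_{p_j}$'s on the product event $\{X_{p_j}<k,\ j\in[N]\}$ yields exactly the independent truncated variables $X_{p_j}^{(k)}$. Your explicit direct computation is a fine (unneeded) sanity check.
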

\begin{proof}
$$
P_N^{(k)}(\{n\})=P_N(\{n\}|S_k)=P(I_N=n|X_{p_j}<k,\  j\in[N])=P(I_N^{(k)}=n),
$$
where the second equality follows from Proposition \ref{IN}.
\end{proof}
\bf\noindent Remark.\rm\ The measure $P_N^{(2)}$ was considered by Cellarosi and Sinai in \cite{CS}.
See also the remark after Theorem \ref{Dickman} below.

We will prove the following result, which identifies a certain natural algebra of subsets of $\mathbb{N}$
on which
$D_{\text{log-indep}}$ and $D_{\text{nat}}$ coincide.

\begin{proposition}\label{nat-log-indep}
The densities $D_{\text{log-indep}}$ and $D_{\text{nat}}$ coincide on the algebra of subsets of $\mathbb{N}$ generated by the inverse images of
$\{\beta_{p_j}\}_{j=1}^\infty$ and the sets $\{S_k\}_{k=2}^\infty$.
\end{proposition}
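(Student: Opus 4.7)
The plan is to pin down a convenient normal form for elements of the algebra $\mathcal{A}$, verify directly that both $D_{\text{nat}}$ and $D_{\text{log-indep}}$ exist on these building blocks and evaluate to the same quantity, and then invoke finite additivity of both densities on disjoint unions (whenever the individual densities exist) to extend the equality to all of $\mathcal{A}$.

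For the normal form, any $A\in\mathcal{A}$ is a Boolean combination of finitely many sets $\beta_{p_{j_i}}^{-1}(B_i)$ (involving only finitely many primes) and finitely many sets $S_{k_1}\subset\cdots\subset S_{k_m}$. Since this chain exhausts $\mathbb{N}$, its Boolean combinations decompose into the atoms $S_{k_1}$, $S_{k_l}\setminus S_{k_{l-1}}$ for $l=2,\ldots,m$, and $S_{k_m}^c$. Intersecting with coordinate cylinders $C_F=\bigcap_{j\in F}\{\beta_{p_j}\in D_j\}$ (with $F$ finite, $D_j\subset\mathbb{Z}_{\ge 0}$) and distributing, $A$ is expressible as a finite disjoint union of sets of the forms $C_F\cap S_k$, $C_F\cap(S_{k'}\setminus S_k)$, and $C_F\cap S_k^c$. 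All three reduce by additivity to the base cases $C_F$ and $C_F\cap S_k$. The plain cylinder $C_F$ has the classical density $D_{\text{nat}}(C_F)=D_{\text{log-indep}}(C_F)=\prod_{j\in F}P(X_{p_j}\in D_j)$, so the one remaining item is $E:=C_F\cap S_k$, for which (after replacing $D_j$ by $D_j\cap\{0,\ldots,k-1\}$) we may assume $D_j\subset\{0,\ldots,k-1\}$ for all $j\in F$.

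The $D_{\text{log-indep}}$ side is immediate from Proposition \ref{IN}. For $N\ge\max F$, the identity $\beta_{p_j}(I_N)=X_{p_j}$ for $j\le N$ and the independence of the $\{X_{p_j}\}$ give
$$
P_N(E)\ =\ \prod_{j\in F}P(X_{p_j}\in D_j)\cdot\prod_{j\in[N]\setminus F}\!\bigl(1-p_j^{-k}\bigr),
$$
which converges as $N\to\infty$ to $\prod_{j\in F}P(X_{p_j}\in D_j)\cdot\prod_{j\notin F}(1-p_j^{-k})$; convergence of the infinite product is guaranteed by $\sum_j p_j^{-k}<\infty$ for $k\ge 2$.

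The substantive step is to show that $D_{\text{nat}}(E)$ exists and equals this same product. My plan is to argue by truncation: for each $M$, the set $A_M:=C_F\cap\bigcap_{j\in[M]\setminus F}\{n:p_j^k\nmid n\}$ is a finite-dimensional cylinder, and thus by the classical result cited in the introduction,
$$
D_{\text{nat}}(A_M)\ =\ \prod_{j\in F}P(X_{p_j}\in D_j)\cdot\prod_{j\in[M]\setminus F}\bigl(1-p_j^{-k}\bigr).
$$
Since $E\subseteq A_M$ and $A_M\setminus E\subseteq\bigcup_{j>M}\{n:p_j^k\mid n\}$, the upper natural density of the defect is bounded by $\sum_{j>M}p_j^{-k}$, which tends to $0$ as $M\to\infty$. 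Sandwiching the upper and lower natural densities of $E$ between $D_{\text{nat}}(A_M)-\sum_{j>M}p_j^{-k}$ and $D_{\text{nat}}(A_M)$, and letting $M\to\infty$, yields existence of $D_{\text{nat}}(E)$ together with the desired product formula. This truncation/sieve step is the main obstacle; it is essentially a conditional version of the classical derivation of $D_{\text{nat}}(S_k)=1/\zeta(k)$.
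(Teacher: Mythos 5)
Your proof is correct, but it takes a genuinely different route from the paper's. The paper verifies the coincidence on three families separately: pure $\beta$-cylinders (where $P_N$ agrees \emph{exactly} with the limiting independent model by Proposition \ref{IN}, whose law is the known weak limit of the uniform measures), the sets $S_k$ alone (citing $D_{\text{nat}}(S_k)=\frac1{\zeta(k)}$ and using the Euler product on the $P_N$ side), and the single-prime events $\{\beta_{p_j}\ge l\}\cap S_k$, whose natural density is Proposition \ref{Skbetap}, proved in the appendix by an inclusion--exclusion argument that rests on the known density of $S_k$. You instead put a general element of the algebra into a normal form built from blocks $C_F\cap S_k$ and compute $D_{\text{nat}}(C_F\cap S_k)$ directly by a truncation/sieve estimate, bounding the defect $A_M\setminus E$ by $\sum_{j>M}p_j^{-k}$. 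Your route is more self-contained ($\frac1{\zeta(k)}$ falls out as the case $F=\emptyset$, so neither the citation nor the appendix is needed) and it treats joint events in several primes together with $S_k$ in one stroke, a case the paper's proof only checks explicitly for a single prime, leaving the passage from those conditional densities to the full algebra implicit; the paper's route, in exchange, is shorter on the natural-density side because it imports a standard fact and isolates the extra computation as a statement of independent interest. Two small points to tidy: for possibly infinite $D_j$ you should remark that weak convergence of $\{\beta_{p_j}\}_{j\in F}$ to a discrete limit yields convergence of the probabilities of \emph{all} subsets (every set is a continuity set in the discrete topology), and the phrase ``this chain exhausts $\mathbb{N}$'' should be deleted, since a finite chain $S_{k_1}\subset\cdots\subset S_{k_m}$ does not exhaust $\mathbb{N}$ --- which is precisely why the atom $S_{k_m}^c$ appears, as your decomposition correctly records.
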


We will show that under the measure $P_N$ as well as under the measure $P_N^{(k)}$,
 the random variable $\frac{\log n}{\log N}$, with $n\in\Omega_N$ in the case
of $P_N$ and $n\in\Omega_N^{(k)}$ in the case of $P_N^{(k)}$, converges in distribution as $N\to\infty$ to the distribution whose
density is $e^{-\gamma}\rho(x)$,    $x\in[0,\infty)$, where
$\gamma$ is Euler's constant, and $\rho$ is the Dickman function, which we now describe.
The Dickman function is  the unique continuous function satisfying
$$
\rho(x)=1,\ x\in(0,1],
$$
 and satisfying the differential-delay equation
$$
x\rho'(x)+\rho(x-1)=0, \ x>1.
$$
By analyzing the Laplace transform of $\rho$, a rather short proof shows
that $\int_0^\infty\rho(x)dx=e^{\gamma}$; thus $e^{-\gamma}\rho(x)$ is indeed a probability
density on $[0,\infty)$. We will call this distribution the \it Dickman distribution\rm.
The distribution decays very rapidly; indeed, it is not hard to show that $\rho(s)\le \frac1{\Gamma(s+1)}$. For an analysis of the Dickman function, see for example, \cite{Tene} or \cite{MV}.
\begin{theorem}\label{Dickman}
Under both $P_N$ and  $P_N^{(k)}$, $k\ge2$, the random variable $\frac{\log n}{\log N}$ converges weakly
to the Dickman distribution.
\end{theorem}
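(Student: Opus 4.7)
The strategy is to prove convergence of Laplace transforms and invoke the continuity theorem. By Proposition \ref{IN}, under $P_N$ the integer $n$ is distributed as $I_N = \prod_{j=1}^N p_j^{X_{p_j}}$, so for $s \ge 0$
$$L_N(s) := E_{P_N}\bigl[n^{-s/\log N}\bigr] = \prod_{j=1}^N E\bigl[p_j^{-(s/\log N) X_{p_j}}\bigr] = \prod_{j=1}^N \frac{1 - 1/p_j}{1 - p_j^{-(1+s/\log N)}}.$$
The plan is to show $L_N(s)\to\exp\bigl(-\int_0^s (1-e^{-u})/u\,du\bigr)$, and to identify the right-hand side with the Laplace transform of the density $e^{-\gamma}\rho(x)$. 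The identification is a standard computation: taking the Laplace transform of the delay equation $x\rho'(x)+\rho(x-1)=0$ (extending $\rho\equiv 0$ on the negatives) gives $s\widehat{\rho}'(s)=(e^{-s}-1)\widehat{\rho}(s)$, and $\widehat{\rho}(0)=e^\gamma$ fixes the constant.

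Taking logarithms and expanding $-\log(1-y)=\sum_{m\ge 1} y^m/m$,
$$\log L_N(s) \;=\; \sum_{m\ge 1}\frac{1}{m}\sum_{j=1}^N \bigl(p_j^{-m(1+s/\log N)}-p_j^{-m}\bigr).$$
For each fixed $m\ge 2$ the inner sum tends to $0$ as $N\to\infty$, and since $\bigl|\sum_j (p_j^{-m(1+s/\log N)}-p_j^{-m})\bigr|\le 2\zeta(m)$ and $\sum_{m\ge 2}\zeta(m)/m<\infty$, dominated convergence eliminates the entire $m\ge 2$ tail. The problem reduces to showing
$$T_N(s)\;:=\;-\sum_{j=1}^N\frac{1-p_j^{-s/\log N}}{p_j}\;\longrightarrow\;-\int_0^s\frac{1-e^{-v}}{v}\,dv.$$

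To treat $T_N(s)$ I would change variable $u=\log p_j/\log p_N$ and view $T_N(s)=-\int_0^1 \bigl(1-e^{-s u\,(\log p_N/\log N)}\bigr)\,dM_N(u)$, where $M_N(u)=\sum_{p\le p_N^u} 1/p$. Mertens' formula $\sum_{p\le x}1/p=\log\log x+M+o(1)$ gives $M_N(u)=\log u+\log\log p_N+M+o(1)$, so after subtracting the $u=1$ value, $M_N$ converges to the measure $du/u$ on $(0,1]$. Combined with $\log p_N/\log N\to 1$ from the PNT and the substitution $v=su$, one recovers $\int_0^s(1-e^{-v})/v\,dv$. I expect this Abel-summation step to be the main technical obstacle: the measure $du/u$ has a logarithmic blow-up at $u=0$, so one must exploit that the integrand $1-e^{-su}$ vanishes linearly at $u=0$ in order to pass to the limit against the Mertens error term uniformly.

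For $P_N^{(k)}$ the analogous calculation, carried out with $X_{p_j}^{(k)}$ via Proposition \ref{conddist}, gives
$$L_N^{(k)}(s) \;=\; L_N(s)\;\cdot\;\prod_{j=1}^N\frac{1-p_j^{-k(1+s/\log N)}}{1-p_j^{-k}}.$$
For $k\ge 2$ the second factor tends to $1$: each factor converges pointwise, and the partial products converge absolutely because $\sum_j p_j^{-k}<\infty$. Hence $L_N^{(k)}(s)$ has the same limit as $L_N(s)$, and weak convergence to the Dickman distribution follows simultaneously for $P_N$ and $P_N^{(k)}$ by the continuity theorem for Laplace transforms.
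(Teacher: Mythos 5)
Your strategy is the same as the paper's at every structural point: use Proposition \ref{IN} to write the Laplace transform of $\frac{\log n}{\log N}$ under $P_N$ as a product over the first $N$ primes, show the product converges to $\exp\big(-\int_0^s\frac{1-e^{-u}}{u}du\big)$, identify this limit with the Dickman distribution, and then dispose of $P_N^{(k)}$ by showing the correction factor $\prod_{j=1}^N\frac{1-p_j^{-k(1+s/\log N)}}{1-p_j^{-k}}$ tends to $1$ because $\sum_j p_j^{-k}<\infty$, exactly as in \eqref{compareEEk}--\eqref{negl}. Where you differ is in the middle: the paper linearizes the logarithm with $x-\frac{x^2}{2}\le\log(1+x)\le x$ and evaluates the resulting sum $\sum_{j\le N}\frac{1-\exp(-t\log p_j/\log N)}{p_j-1}$ as a Riemann sum for $\int_0^1\frac{1-e^{-tx}}{x}dx$, using the PNT in the form $p_j\sim j\log j$ to control the mesh; you expand the full logarithmic series and evaluate the $m=1$ term by Abel summation against Mertens' second theorem, which needs only $\log p_N\sim\log N$ and is thus, in principle, more elementary. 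You also replace the paper's separate tightness argument (boundedness of $E_N\frac{\log n}{\log N}$ via Mertens' first theorem) by deriving the Dickman Laplace transform from the delay equation and invoking the continuity theorem directly; that is a legitimate simplification, since the limit function is itself the transform of a proper probability law.

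Two points need repair or completion. First, your domination of the $m\ge2$ tail is wrong as stated: $\sum_{m\ge2}\zeta(m)/m=\infty$ because $\zeta(m)\ge1$, so the bound $2\zeta(m)$ is not summable against $\frac1m$. The fix is immediate: the inner sum is bounded in absolute value by $\sum_j p_j^{-m}\le\zeta(m)-1=O(2^{-m})$, which is summable against $\frac1m$, and the dominated convergence argument then goes through. Second, the Abel-summation step that you yourself flag as the main obstacle is only sketched; it is where the real work of the proof lies (it is the analogue of the paper's \eqref{loglim}--\eqref{finalthm1}). It does go through: write $\sum_{j\le N}\frac{1-\exp(-s\log p_j/\log N)}{p_j}$ as a Stieltjes integral against $A(x)=\sum_{p\le x}\frac1p=\log\log x+M+\epsilon(x)$; the $\log\log x$ part yields $\int_{\log 2/\log N}^{\log p_N/\log N}\frac{1-e^{-su}}{u}du\to\int_0^1\frac{1-e^{-su}}{u}du$, the singularity at $u=0$ being harmless precisely because $1-e^{-su}$ vanishes linearly there; the $\epsilon$ part is handled by integration by parts and splitting the range of integration at a fixed large $x_0$ beyond which $|\epsilon|<\delta$, giving a contribution $O(\delta)+o(1)$. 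Until that computation is written out, the argument is a correct plan rather than a complete proof.
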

\noindent\bf Remark.\rm\ For $P_N^{(2)}$, Theorem \ref{Dickman} was first proved by
Cellarosi and Sinai \cite{CS}. Their proof involved calculating characteristic functions and was
 quite tedious and long. Our  short proof uses Laplace transforms and the asymptotic
growth rate of the primes given by the Prime Number
Theorem (henceforth PNT).
 After this paper was written, one of the authors of \cite{GM} pointed out to the present author that their paper
also gives a simpler proof of the result in \cite{CS}.
\medskip

Using Theorem \ref{Dickman} we can recover a classical result from multiplicative number theory; namely,

\noindent \bf Mertens' formula.\rm\
\begin{equation}\label{Mertensform}
C_N=\sum_{n\in\Omega_N}\frac1n=\prod_{j=1}^N(1-\frac1{p_j})^{-1}\sim e^{\gamma}\log N,
\ \text{as}\ N\to\infty.
\end{equation}
(Traditionally   the formula is written as $\prod_{p\le N}(1-\frac1p)^{-1}\sim e^{\gamma}\log N$, where the product is over all primes less than or equal to $N$.
 To show that the two are equivalent only requires the fact that $p_N=o(N^{1+\epsilon})$, for any $\epsilon>0$.)
A nice, alternative form of the formula is
$$
\frac{\sum_{n\in\Omega_N}\frac1n}{\sum_{n=1}^N\frac1n}\sim e^\gamma.
$$
Here is the derivation of Mertens' formula from Theorem \ref{Dickman}.
From the definition of $P_N$, we have
$P_N(\frac{\log n}{\log N}\le 1)=\frac1{C_N}\sum_{n=1}^N\frac1n$.
Thus, from Theorem \ref{Dickman}, we have
$\lim_{N\to\infty}\frac1{C_N}\sum_{n=1}^N\frac1n=\int_0^1e^{-\gamma}\rho(x)dx=e^{-\gamma}$.
Now \eqref{Mertensform} follows from this and the fact that $\sum_{n=1}^N\frac1n\sim\log N$.

A direct  proof that  $C_N\sim c\log N$, for some $c$, follows readily with the help of Mertens' second theorem (see \eqref{Mertens2nd}). The proof that the constant is $e^\gamma$ is quite nontrivial.
Of course, our proof of  Mertens' formula  via Theorem \ref{Dickman} uses the fact that   $\int_0^\infty \rho(x)dx=e^\gamma$, but as noted,  this result is obtained readily by analyzing the Laplace transform
of $\rho$.
\medskip

We now present a proof, independent of the  proof we will give later for Theorem \ref{Dickman}, that
 if the limiting distribution of $\frac{\log n}{\log N}$ under $P_N$  exists, then it must be the Dickman distribution. We believe that this is of independent interest.
 Let
$$
J^+_N=\max\{j\in[N]: X_{p_j}\neq0\},
$$
with $\max\emptyset\equiv0$. By Proposition \ref{IN}, the distribution of
$\frac{\log n}{\log N}$ under $P_N$ is  equal to the distribution of
\begin{equation}\label{DNeq}
\begin{aligned}
&D_N\equiv\frac1{\log N}\sum_{n=1}^N X_{p_j}\log p_j=\big(\frac{\log J^+_N}{\log N}\big)\frac1{\log J^+_N}\sum_{j=1}^{J^+_N-1}X_{p_j}\log p_j+\\
&X_{p_{J^+_N}}\frac{\log p_{J^+_N}}{\log N},
\end{aligned}
\end{equation}
where, of course, the sum on the right hand side above is interpreted as equal to 0 if
$J^+_N\le1$, and where we define $p_0=1$.
Our assumption  is that  $\{D_N\}_{N=1}^\infty$ converges weakly to some distribution.
Since $P(J^+_N\le j)=\prod_{m=j+1}^N(1-\frac1{p_m})$,  we have $J^+_N\to\infty$ a.s. as $N\to\infty$.
Also, by the independence of $\{X_{p_j}\}_{j=1}^\infty$, we have $\sum_{j=1}^{J^+_N-1}X_{p_j}\log p_j|\{J^+_N=j_0\}\stackrel{\text{dist}}{=}\sum_{j=1}^{j_0-1}X_{p_j}\log p_j$.
Thus, $\frac1{\log J^+_N}\sum_{j=1}^{J^+_N-1}X_{p_j}\log p_j$
 converges weakly to the same distribution.
Using no more than the weak form of  Merten's formula (namely, $\prod_{j=1}^N(1-\frac1{p_j})^{-1}\sim c\log N$, for some $c$) for the asymptotic equivalence below, we have for $0<x<1$,
\begin{equation}\label{JNasym}
\begin{aligned}
P(\frac{\log J^+_N}{\log N}\le x)=P(J^+_N\le N^x)=\prod_{j=[N^x+1]}^N(1-\frac1{p_j})\sim\frac{\log N^x}{\log N}=x.
\end{aligned}
\end{equation}
Using only the fact that $p_j=o(j^{(1+\epsilon)})$, for any $\epsilon>0$,  it follows that  \eqref{JNasym} also holds with
$\frac{\log J^+_N}{\log N}$ replaced by $\frac{\log p_{J^+_N}}{\log N}$.
Note that $X_{p_{J_N^+}}$ conditioned on $\{J_N^+=j_0\}$ is distributed as $X_{p_{j_0}}$ conditioned on $\{X_{p_{j_0}}\ge1\}$.
A trivial calculation shows  that the conditional distribution of $X_{p_j}$,  conditioned on $X_{p_j}\ge1$, converges weakly to 1 as $j\to\infty$.
From the above facts and \eqref{DNeq} it  follows that if $D$ denotes a random variable distributed according to the limiting distribution of
$\{D_N\}_{N=1}^\infty$, then
\begin{equation}\label{dickmanident}
D\stackrel{\text{dist}}{=}DU+U,
\  \ U\stackrel{dist}{=} \text{Unif}([0,1]),\  U \ \text{and}\ D\ \text{independent}.
\end{equation}
From this, it is a calculus exercise to show that $D$ has a continuous density $f$,  that $f$ is equal to some constant $c$ on $(0,1]$, and that  $f$ satisfies the differential-delay equation satisfied by
the Dickman function $\rho$ on $x>1$. (See, for example, \cite{P}.) Thus $f=c\rho$. Since $f$ is a density and since $\int_0^\infty\rho(x)dx=e^\gamma$, it follows that the density of $D$ is $e^{-\gamma}\rho$.

\medskip

The Dickman function arises in probabilistic number theory in the context of so-called \it smooth \rm\ numbers; that is, numbers all of whose prime divisors are ``small.''
Let $\Psi(x,y)$ denote the number of positive integers less than or equal to $x$ with no
prime divisors greater than $y$. Numbers with no prime divisors greater than $y$ are  called $y$-\it smooth\rm\ numbers.
Then for $s\ge1$,
$\Psi(N,N^\frac1s)\sim N\rho(s)$, as $N\to\infty$.
%with the convergence uniform over bounded sets of $s$.
This result was first proved by Dickman in 1930 \cite{Dick}, whence the name of the function, with later refinements by de Bruijn \cite{deB1}. (In particular, there are rather precise error terms.)
See also \cite{MV} or \cite{Tene}.
Let $p^{+}(n)$ denote the largest prime divisor of $n$. Then Dickman's result states that
 the random variable $\frac{\log N}{\log p^{+}(n)}$ on the probability space  $[N]$ with the uniform distribution converges weakly in distribution as $N\to\infty$ to
the distribution whose distribution function is $1-\rho(s)$, $s\ge1$, and whose density is $-\rho'(s)=\frac{\rho(s-1)}s,\  s\ge1$.
Since $\frac{\log n}{\log N}$ on the probability space  $[N]$ with the uniform distribution converges weakly in distribution to 1 as $N\to\infty$, an equivalent statement of Dickman's
result is that the random variable $\frac{\log n}{\log p^{+}(n)}$ on the probability space  $[N]$ with the uniform distribution converges weakly in distribution as $N\to\infty$ to
the distribution whose distribution function is $1-\rho(s)$, $s\ge1$,
For later use,
we state this as follows in terms of the natural density:
\begin{equation}\label{Dickthm}
D_{\text{nat}}(\{n\in\mathbb{N}:p^{+}(n)\le n^\frac1s\})=D_{\text{nat}}(\{n\in\mathbb{N}:\frac{\log n}{\log p^{+}(n)}\ge s\})=\rho(s),\ s\ge1.
\end{equation}
We will call $\{n\in\mathbb{N}:p^{+}(n)\le n^\frac1s\}$ the set of \it\ $n^\frac1s$-smooth numbers.\rm\

The standard number-theoretic proof of  Dickman's result is via induction. It can be checked that this inductive proof also works to obtain a corresponding result for $k$-free integers. Thus,
\begin{equation}\label{DickthmSk}
\begin{aligned}
&D_{\text{nat}}(\{n\in\mathbb{N}:p^{+}(n)\le    n^\frac1s\}|S_k)=D_{\text{nat}}(\{n\in\mathbb{N}:\frac{\log n}{\log p^{+}(n)}\ge s\}|S_k)=\rho(s),\\
&\text{for}\ s\ge1 \ \text{and}\ k\ge2.
\end{aligned}
\end{equation}

\noindent \bf Remark.\rm\ Equivalent to \eqref{Dickthm} is the statement that   $\frac{\log p^{+}(n)}{\log n}$ on $[N]$ with the uniform distribution converges weakly in distribution as $N\to\infty$ to
the distribution whose distribution function is $\rho(\frac1s),\ s\in[0,1]$. The corresponding density function is then $\frac{-\rho'(\frac1s)}{s^2}=\frac1s\rho(\frac1s-1)$.
In the spirit of \eqref{dickmanident}, it has been shown that if $\hat D$ denotes a random variable with the above distribution, then
$$
\hat D\stackrel{dist}{=}\max(1-U,\hat Du),\ \ U\stackrel{dist}{=}\text{Unif}([0,1]),\  U \ \text{and}\ \hat D\ \text{independent}.
$$
In light of the comparison between \eqref{dickmanident} and the above equation, the distribution has been dubbed the \it  max-Dickman distribution\rm\ \cite{PW}.
This distribution is the first coordinate of the Poisson-Dirichlet distribution on the infinite simplex
$\{x=(x_1,x_2,\ldots): x_i\ge0, \sum_{i=1}^\infty x_i=1\}$. The Poisson-Dirichlet distribution can be defined as the decreasing order statistics of the GEM distribution, where the GEM
distribution is the ``stick-breaking'' distribution: let $\{U_n\}_{n=1}^\infty$ be IID uniform variables on $[0,1]$; let $Y_1=U_1$, and let $Y_n=U_n\prod_{r=1}^{n-1}(1-U_r)$, $n\ge2$;
then $(Y_1,Y_2,\ldots)$ has the GEM distribution.
The $n$-dimensional density function for the distribution of the first $n$ coordinates of the Poisson-Dirichlet distribution is given by
\begin{equation*}\label{PD-ncoord}
\begin{aligned}
&f^{(n)}(s_1,s_2,\ldots, s_n)=\frac1{s_1\cdots s_n}\rho(\frac{1-s_1-\cdots-s_n}{s_n}),\\
&\text{for}\ 0<s_n<\cdots<s_1<1\ \text{and}\ \sum_{j=1}^ns_j<1.
\end{aligned}
\end{equation*}
Let $p^+_j(n)$ denote the $j$th largest distinct prime divisor of $n$, with $p^+_j(n)=1$ if $n$ has fewer than $j$ distinct prime divisors.
In 1972 Billingsley \cite{Bill72} gave a probabilistic proof of the fact  that $\frac1{\log n}(\log p^+_1(n),\log p^+_2(n),\ldots)$ on $[N]$ with the uniform distribution converges weakly in distribution as $N\to\infty$ to
the Poission-Dirichlet distribution. However,  he did not identify it as such as the the theory of the Poisson-Dirichlet distribution had not yet been developed.
(We note that the random vector consisting of the lengths of the cycles of a uniformly random permutation of $[N]$, arranged in decreasing order, when normalized by dividing their lengths by $N$, also converges as $N\to\infty$
to the Poisson-Dirichlet distribution \cite{ABT}.)
\medskip

Proposition \ref{nat-log-indep} shows that $D_{\text{nat}}$ and $D_{\text{log-indep}}$
coincide on a certain natural algebra of sets. We will prove  that they disagree on the sets appearing in \eqref{Dickthm} or \eqref{DickthmSk}; namely on
the sets of $n^\frac1s$-smooth numbers, $s>1$, and on the intersection of such a set with the set of $k$-free numbers, $S_k$, $k\ge2$.

\begin{theorem}\label{DickmanforPN}
Under both $P_N$ and $P_N^{(k)}$  the random variable $\frac{\log n}{\log p^{+}(n)}$  converges weakly  as $N\to\infty$ to
$D+1$, where $D$ has the  Dickman distribution; that is,
\begin{equation}\label{PNDick}
\begin{aligned}
&D_{\text{log-indep}}(\{n\in\mathbb{N}:p^{+}(n)\le n^\frac1s\})=D_{\text{log-indep}}(\{n\in\mathbb{N}:\frac{\log n}{\log p^{+}(n)}\ge s\})=\\
&e^{-\gamma}\int_{s-1}^\infty\rho(x)dx,\ s\ge1;
\end{aligned}
\end{equation}
\begin{equation*}\label{PNDickSk}
\begin{aligned}
&D_{\text{log-indep}}(\{n\in\mathbb{N}:p^{+}(n)\le n^\frac1s\}|S_k)=D_{\text{log-indep}}(\{n\in\mathbb{N}:\frac{\log n}{\log p^{+}(n)}\ge s\}|S_k)=\\
&e^{-\gamma}\int_{s-1}^\infty\rho(x)dx,\ s\ge1, k\ge2.
\end{aligned}
\end{equation*}
\end{theorem}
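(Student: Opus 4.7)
The plan is to use Proposition \ref{IN} to replace $n$ distributed under $P_N$ by the random integer $I_N = \prod_{j=1}^N p_j^{X_{p_j}}$, and then decompose according to the index $J_N^+ = \max\{j \le N : X_{p_j} \neq 0\}$ of the largest prime factor (with $\max\emptyset = 0$; the event $\{J_N^+ = 0\}$ corresponds to $I_N = 1$ and has vanishing probability as $N\to\infty$). On $\{J_N^+ = j_0\}$, $p^+(I_N) = p_{j_0}$ and
$$
\frac{\log I_N}{\log p^+(I_N)} = X_{p_{j_0}} + \frac{1}{\log p_{j_0}} \sum_{j=1}^{j_0-1} X_{p_j} \log p_j.
$$
By independence of $\{X_{p_j}\}_{j=1}^\infty$, and because the event $\{J_N^+ = j_0\}$ factors as $\{X_{p_{j_0}} \ge 1\} \cap \{X_{p_m} = 0 \text{ for } m > j_0\}$ (events on disjoint coordinates), conditional on $\{J_N^+ = j_0\}$ the vector $(X_{p_1},\ldots,X_{p_{j_0-1}})$ retains its unconditional joint law, $X_{p_{j_0}}$ has the law of $X_{p_{j_0}}$ given $X_{p_{j_0}} \ge 1$, and the two pieces are conditionally independent.

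Next I would show $J_N^+ \to \infty$ in probability: from $P(J_N^+ \le j) = \prod_{m=j+1}^N (1 - 1/p_m)$ and divergence of $\sum 1/p_m$, this tends to $0$ for every fixed $j$. As $j_0 \to \infty$, the conditional distribution of $X_{p_{j_0}}$ given $X_{p_{j_0}} \ge 1$ collapses to the point mass at $1$, since $P(X_{p_{j_0}} = 1 \mid X_{p_{j_0}} \ge 1) = 1 - 1/p_{j_0} \to 1$. Simultaneously, the small-prime sum $\sum_{j=1}^{j_0-1} X_{p_j} \log p_j$ has the distribution of $\log I_{j_0-1}$, so by Theorem \ref{Dickman} and the PNT asymptotic $\log p_{j_0} \sim \log j_0 \sim \log(j_0 - 1)$, the quantity $(\log p_{j_0})^{-1} \sum_{j<j_0} X_{p_j} \log p_j$ converges in distribution to a Dickman-distributed variable $D$. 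Conditional independence and Slutsky's theorem then give, conditional on $\{J_N^+ = j_0\}$ with $j_0 \to \infty$, that $\log I_N/\log p^+(I_N)$ converges in distribution to $1 + D$.

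To deduce the unconditional limit, for any bounded continuous $\phi$ I would write
$$
E\Bigl[\phi\bigl(\tfrac{\log I_N}{\log p^+(I_N)}\bigr)\Bigr] = \sum_{j_0 \ge 1} P(J_N^+ = j_0)\, E\Bigl[\phi\bigl(\tfrac{\log I_N}{\log p^+(I_N)}\bigr)\,\Big|\,J_N^+ = j_0\Bigr],
$$
use $J_N^+ \to \infty$ in probability to truncate away small $j_0$, and apply the preceding conditional weak limit uniformly for $j_0 \ge K$ with $K$ large to obtain $E[\phi(1+D)]$ in the limit. This establishes $\log n / \log p^+(n) \Rightarrow 1 + D$ under $P_N$, whence
$$
D_{\text{log-indep}}(\{n : p^+(n) \le n^{1/s}\}) = P(1 + D \ge s) = P(D \ge s-1) = e^{-\gamma}\!\int_{s-1}^\infty \rho(x)\,dx.
$$

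The $P_N^{(k)}$ statement follows by the identical scheme, with $X_{p_j}$ replaced throughout by $X_{p_j}^{(k)}$: the conditional law of $X_{p_j}^{(k)}$ given $X_{p_j}^{(k)} \ge 1$ still concentrates on $1$ as $j \to \infty$ by a trivial truncated-geometric computation, and the Dickman limit for the small-prime sum is the $P_N^{(k)}$ case of Theorem \ref{Dickman}, applied on the index $j_0 - 1$. The only delicate technical point will be the passage from Theorem \ref{Dickman} with the \emph{fixed} cutoff $N$ to its use with the \emph{random} cutoff $J_N^+ - 1$; what makes this routine is that $J_N^+ \to \infty$ in probability, so that with probability tending to $1$ the random cutoff is as large as we like, and the conditional independence above permits us to apply the Dickman limit pointwise in $j_0$ before integrating out.
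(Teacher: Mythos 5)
Your proposal is correct and follows essentially the same route as the paper's proof: decompose via $J_N^+$, use independence to see that conditional on $\{J_N^+=j_0\}$ the low-index sum keeps its unconditional law while $X_{p_{j_0}}$ is a geometric conditioned to be positive (hence tends to $1$), and invoke Theorem \ref{Dickman} together with $\log p_{j_0}\sim\log j_0$ to get the Dickman limit for the normalized low-index sum, yielding $D+1$. Your explicit truncation argument for integrating out $j_0$ merely fills in a step the paper leaves implicit.
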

\noindent\bf Remark.\rm\ Recalling that whenever the natural density exists, the logarithmic one does too and they are equal, it follows  from \eqref{Dickthm}
that $D_{\text{log}}(\{n\in\mathbb{N}:p^{+}(n)\le n^\frac1s\})=\rho(s)$.
Since, as we've noted, the weights used in calculating the densities  $D_{\text{log}}$ and $D_{\text{log-indep}}$ have the same profile, but the sequences
of subsets of $\mathbb{N}$  over which the limits are taken, namely $\{[N]\}_{N=1}^\infty$ and $\{\Omega_N\}_{N=1}^\infty$, are different,
and since the integers in $[N]$ and in $\Omega_N$ are constructed from the same set $\{p_j\}_{j=1}^N$ of primes, and $[N]\subset\Omega_N$,
intuition suggests that
\begin{equation}\label{comparedickmans}
\rho(s)\le e^{-\gamma}\int_{s-1}^\infty\rho(x)dx, \  s\ge1;
\end{equation}
 that is, that under $D_{\text{log-indep}}$, $n^\frac1s$-smooth numbers are more likely than under
$D_{\text{nat}}$. And indeed this is the case. Letting $H(s)=e^{-\gamma}\int_{s-1}^\infty\rho(x)dx-\rho(s)$, we have $H(1)=H(\infty)=0$. Differentiating
$H$, and using the differential-delay equation satisfied by $\rho$, one has $H'(s)=-e^{-\gamma}\rho(s-1)-p'(s)=\rho(s-1)(\frac1s-e^{-\gamma})$.
Thus, $H'(s)$ vanishes only at $s=e^{\gamma}$. Differentiating again and again using the differential-delay equation, one finds that $H''(e^\gamma)<0$; thus,
$H(s)\ge0$, for $s\ge1$, proving \eqref{comparedickmans}.
\medskip

We now consider  integers all of whose prime divisors are ``large.'' Let $\Phi(x,y)$ denote the number of positive integers less than or equal to $x$ all of whose prime divisors
are greater than or equal to $y$. Numbers with no prime divisors less than $y$ are
called $y$-\it rough\rm\ numbers.
The Buchstab function $\omega(s)$, defined for $s\ge1$, is  the unique continuous function satisfying
$$
\omega(s)=\frac1s, \ 1\le s\le 2,
$$
 and  satisfying the differential-delay equation
$$
(s\omega(s))'=\omega(s-1),\ s>2.
$$
In 1937, Buchstab proved \cite{Buch} that for $s>1$,
$\Phi(N,N^\frac1s)\sim\frac{Ns\omega(s)}{\log N}$ as $N\to\infty$; whence the name of the function.
See also \cite{MV} or \cite{Tene}.
Let $p^{-}(n)$ denote the smallest prime divisor of $n$.
Then Buchstab's result states that
\begin{equation}\label{Buch}
\begin{aligned}
&\frac{|\{n\in[N]: p^{-}(n)\ge N^\frac1s\}|}N=\frac{|\{n\in[N]: \frac{\log N}{\log p^-(n)}\le s\}|}N
\sim\frac{s\omega(s)}{\log N},\\
&\text{for}\ s>1,
\ \text{as}\ N\to\infty.
\end{aligned}
\end{equation}
Since $\frac{|\{n\in[N]:\frac{\log N}{\log n}>1+\epsilon\}|}N=N^{-\epsilon}$, it follows that \eqref{Buch} is equivalent to
 \begin{equation}\label{Buchagain}
\begin{aligned}
&\frac{|\{n\in[N]: p^{-}(n)\ge n^\frac1s\}|}N=\frac{|\{n\in[N]: \frac{\log n}{\log p^-(n)}\le s\}|}N
\sim\frac{s\omega(s)}{\log N},\\
&\text{for}\ s>1,
\ \text{as}\ N\to\infty.
\end{aligned}
\end{equation}
One has $\lim_{s\to\infty}\omega(s)=e^{-\gamma}$, and  the rate of convergence is super-exponential \cite{Tene}.
We will call $\{n\in[N]: p^{-}(n)\ge n^\frac1s\}$ the set of \it $n^\frac1s$-rough numbers.\rm\
(We note that the probability that the shortest  cycle of a uniformly random permutation of $[N]$ is larger or equal to $\frac Ns$  decays asymptotically as $\frac{s\omega(s)} N$.)

Note that  \eqref{Buchagain} also holds for $s=1$, since in this case \eqref{Buchagain} reduces to $\frac{\Pi(N)}N\sim\frac 1{\log N}$; that is, it reduces
to the  PNT. Buchstab assumed the PNT in proving \eqref{Buch}.

What is the asymptotic  probability of a prime number under the sequence of measures used to construct the logarithmic density $D_{\text{log}}$ and under the sequence $\{P_N\}_{N=1}^\infty$ used to construct
the density $D_{\text{log-indep}}$?
Mertens' second theorem states that
\begin{equation}\label{Mertens2nd}
\sum_{p\le N}\frac1p=\log\log N+M_0+O(\frac1{\log N}),
\end{equation}
where the summation is over primes $p$, and where $M_0$ is called  the Meissel-Mertens constant \cite{N}.
By the PNT, $p_N\sim N\log N$, thus by Mertens' second theorem,
\begin{equation}\label{Mertens2}
\sum_{j=1}^N\frac1{p_j}\sim\log\log (N\log N)\sim\log\log N.
\end{equation}
From \eqref{Mertens2nd} we conclude that for the sequence of measures used to construct the logarithmic density $D_{\text{log}}$, the probability of a prime is
\begin{equation}\label{logdensity}
\frac1{\log N}\sum_{p\le N}\frac1p\sim \frac{\log\log N}{\log N}.
\end{equation}
Since
\begin{equation*}\label{PNPNprelim}
P_N(\{n\in\Omega_N: n\ \text{is prime}\})=\frac1{C_N}\sum_{j=1}^N\frac1{p_j},
\end{equation*}
 from   \eqref{Mertens2} and  Mertens formula given  in \eqref{Mertensform}, we conclude that
for the sequence  $\{P_N\}_{N=1}^\infty$ use to construct the density $D_{\text{log-indep}}$, the probability of a prime satisfies
\begin{equation}\label{PNPN}
P_N(\{n\in\Omega_N: n\ \text{is prime}\})\sim \frac{e^{-\gamma}\log \log N}{\log N}.
\end{equation}

From \eqref{logdensity} and \eqref{PNPN} it is clear that \eqref{Buchagain} cannot hold when the sequence of uniform measures on $[N]$, $N=1,2,\ldots$, appearing on the left hand side there is replaced either by the sequence of
measures
used to calculate the logarithmic density  $D_{\text{log}}$ or by the sequence $\{P_N\}_{N=1}^\infty$ used to calculate the density $D_{\text{log-indep}}$.
However, letting
$$
a_s(n)=\begin{cases} 1,\  p^-(n)\ge n^{\frac1s},\\ 0,\  \text{otherwise},\end{cases},
$$
and $A_s(t)=\sum_{j=1}^{[t]}a_s(j)$, $t\ge1$,
a summation by parts gives
\begin{equation}\label{sumbyparts}
\sum_{n\le N: p^-(n)\ge n^{\frac1s}}\frac1n=\sum_{n=1}^N\frac{a_s(n)}n=\frac{A_s(N)}N+\int_1^N\frac{A_s(t)}{t^2}dt.
\end{equation}
By \eqref{Buchagain}, $\frac{A_s(t)}t\sim\frac{s\omega(s)}{\log t}$ as $t\to\infty$; thus from
\eqref{sumbyparts} we have
$$
\frac1{\log N}\sum_{n\le N: p^-(n)\ge n^{\frac1s}}\frac1n\sim\log\log N~\frac{s\omega(s)}{\log N}.
$$
That is, modulo the change necessitated by  comparing \eqref{logdensity} to the PNT, Buchstab's result on $n^\frac1s$-rough numbers for the uniform measure in \eqref{Buchagain} carries over to the measures used
in the construction of the logarithmic density.

Modulo the change necessitated by comparing  \eqref{PNPN} to the PNT, does Buchstab's result on $n^\frac1s$-rough numbers
also carry over to the measures $\{P_N\}_{N=1}^\infty$ used in the construction of the density $D_{\text{log-indep}}$?
Since \eqref{Dickthm} and  \eqref{PNDick} show that the  \it positive\rm\  densities with respect $D_{\text{nat}}$ and $D_{\text{log-indep}}$ of the $n^\frac1s$-smooth sets $\{n\in\mathbb{N}:p^{+}(n)\le n^\frac1s\}$
%involving the size of the prime divisors of an integer
do not coincide,
 it is interesting to discover that the  answer is indeed affirmative.
%there   seems no reason to expect an affirmative answer.
%that the asymptotic behavior under the uniform measure on $[N]$ and the under the $P_N$ measure on $\Omega_N$ of the
%sets $|\{n\in[N]: p^{-}(n)\ge n^\frac1s\}$, which have    \it zero\rm\ density with
%respect to $D_{\text{nat}}$
%and $D_{\text{log-indep}}$, should coincide.
%Thus, it is interesting to discover that the asymptotic profiles of the probabilities of these sets $\{n\in[N]: p^{-}(n)\ge n^\frac1s\}$, for $s>1$,  under the uniform measure on $[N]$ and under
%the  measure $P_N$ on $\Omega_N$ are the same; the only difference is the change in order mandated by comparing \eqref{PNPN} to the PNT.
\begin{theorem}\label{BuchstabforPN}
For $s\ge1$,
\begin{equation}\label{Buchstab}
\begin{aligned}
&P_N(\{n\in[N]: p^{-}(n)\ge n^\frac1s\})=P_N(\frac{\log n}{\log p^-(n)}\le s)
\sim (e^{-\gamma}\log\log N)~\frac{s\omega(s)}{\log N},\\
& \text{as}\ N\to\infty.
\end{aligned}
\end{equation}
\end{theorem}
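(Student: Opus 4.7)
The plan is to exploit Proposition~\ref{IN} to replace $P_N$ by the law of $I_N=\prod_{j=1}^N p_j^{X_{p_j}}$, and then to decompose the event $\{\log n\le s\log p^{-}(n)\}$ according to the index of the smallest prime factor. Let $J^{-}=\min\{j\in[N]:X_{p_j}\ge 1\}$, with $J^{-}=\infty$ on the atom $\{I_N=1\}$ (which contributes only $O(1/\log N)$ by Mertens' third theorem). Conditionally on $J^{-}=j_0$, the variables $X_{p_j}$ for $j<j_0$ vanish, $X_{p_{j_0}}$ has the law of $X_{p_{j_0}}$ conditioned on being positive (so equals $1$ with probability $1-1/p_{j_0}$), and the $\{X_{p_j}\}_{j>j_0}$ remain independent with their original laws. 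Writing $I'_{j_0}=\prod_{j=j_0+1}^N p_j^{X_{p_j}}$, the event $\log I_N\le s\log p_{j_0}$ reduces, up to a contribution of relative order $O(1/p_{j_0})$ from $\{X_{p_{j_0}}\ge 2\}$, to $\{\log I'_{j_0}\le (s-1)\log p_{j_0}\}$. Proposition~\ref{IN} applied to the truncated prime set $\{p_{j_0+1},\ldots,p_N\}$ shows that $I'_{j_0}$ weighs $\{m\in\Omega_N:p^{-}(m)>p_{j_0}\}$ proportionally to $1/m$, with normalization $C'_{j_0,N}=C_N/C_{j_0}\sim\log N/\log p_{j_0}$ by Mertens' formula~\eqref{Mertensform}.

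Two asymptotic ingredients then drive the computation. By Mertens' third theorem,
\[
P(J^{-}=j_0)=\frac{1}{p_{j_0}}\prod_{j<j_0}\left(1-\tfrac{1}{p_j}\right)\sim\frac{e^{-\gamma}}{p_{j_0}\log p_{j_0}},\qquad j_0\to\infty.
\]
Partial summation applied to Buchstab's estimate~\eqref{Buch} in the form $\Phi(t,y)\sim t\omega(\log t/\log y)/\log y$ yields, as $j_0\to\infty$ with $p_{j_0}^{s-1}\le p_N$,
\[
\sum_{\substack{m\le p_{j_0}^{s-1}\\ p^{-}(m)\ge p_{j_0}}}\frac{1}{m}\longrightarrow 1+\int_1^{s-1}\omega(u)\,du=s\omega(s),
\]
the closed form being the integrated version of $(s\omega(s))'=\omega(s-1)$ (both sides collapse to $1$ for $s\in[1,2]$, where the sum itself collapses to $1$). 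Therefore $P(\log I_N\le s\log p_{j_0}\mid J^{-}=j_0)\sim s\omega(s)\log p_{j_0}/\log N$, and multiplying the two asymptotics and summing gives
\[
\sum_{j_0} P(J^{-}=j_0)\cdot\frac{s\omega(s)\log p_{j_0}}{\log N}\sim\frac{e^{-\gamma}s\omega(s)}{\log N}\sum_{j_0=1}^N\frac{1}{p_{j_0}}\sim\frac{e^{-\gamma}s\omega(s)\log\log N}{\log N}
\]
by Mertens' second theorem~\eqref{Mertens2}, which is the stated conclusion.

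The main technical hurdle is uniformity in $j_0$ of the pointwise asymptotics above. For small $j_0\le y_0$ with $y_0\to\infty$ slowly (say $y_0=\log\log N$), the crude bound $P(\,\cdot\mid J^{-}=j_0)=O(\log p_{j_0}/\log N)$ coming from $C'_{j_0,N}^{-1}$ and the boundedness of the inner sum gives a total contribution of order $\log y_0/\log N=o(\log\log N/\log N)$. For $j_0>j_1:=\pi(p_N^{1/(s-1)})$, where the cap $p_{j_0}^{s-1}$ overshoots $p_N$ and the Buchstab asymptotic for the inner sum must be adjusted, the trivial bound $P(\,\cdot\mid J^{-}=j_0)\le 1$ combined with $\sum_{j_0>j_1}P(J^{-}=j_0)=O(1/\log N)$ suffices. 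The bulk range $y_0<j_0\le j_1$ yields the main term. As a byproduct, the identity
\[
s\omega(s)=\lim_{y\to\infty}\sum_{\substack{m\ge 1\\ p^{-}(m)\ge y\\ m\le y^{s-1}}}\frac{1}{m}
\]
emerges from the inner-sum analysis and is the new representation of the Buchstab function flagged in the abstract.
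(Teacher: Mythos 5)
Your proposal is correct in outline and shares its skeleton with the paper's proof: both condition on the index $J^-_N$ of the smallest prime with $X_{p_j}\ge 1$, note that given $J^-_N=a$ the event reduces (up to an $O(1/p_a)$ relative error from $X_{p_a}\ge2$) to $\{I'_a\le p_a^{\,s-1}\}$ where $I'_a=\prod_{j>a}p_j^{X_{p_j}}$, and both then sum the resulting conditional probabilities against $P(J^-_N=a)\sim e^{-\gamma}/(p_a\log p_a)$ using Mertens' second and third theorems, with the same three-range splitting (small $a$, bulk, and the range where $p_a^{\,s-1}$ overshoots $p_N$). Where you genuinely diverge is in the evaluation of the inner quantity $\sum_{m\le p_a^{\,s-1},\,p^-(m)>p_a}1/m$. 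The paper evaluates it from scratch: it expands over the number $L-1$ of prime factors of $m$, turns each configuration into an iterated sum over primes, and uses only Mertens' theorems and the PNT to obtain the iterated integrals $\Lambda_L(s)$ of \eqref{Lambda}; the identification $\sum_{L\le[s]}\Lambda_L(s)=s\omega(s)$ is made only afterwards via the differential--delay equation, which is exactly how the new representation \eqref{newBuch} (Proposition \ref{Buchrep}) is produced. You instead feed Buchstab's classical theorem \eqref{Buch} into a partial summation to get $\sum_{m\le y^{s-1},\,p^-(m)>y}1/m\to 1+\int_1^{s-1}\omega(u)\,du=s\omega(s)$ directly. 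Your route is shorter and cleaner if one is willing to quote \eqref{Buch}, whereas the paper's route is self-contained (it never invokes Buchstab's counting theorem) and yields Proposition \ref{Buchrep} as a byproduct, which your argument does not recover.

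Two caveats. First, the partial summation needs $\Phi(t,y)\sim t\,\omega(\log t/\log y)/\log y$ with some uniformity as $\log t/\log y$ ranges over $[1+\epsilon,s-1]$ (plus a separate, easy treatment of the range $\log t/\log y$ near $1$, where the asymptotic with the $-y/\log y$ correction degenerates but the sum only contributes $O(\epsilon)$); this uniform version is classical, and can also be obtained from the fixed-$u$ statement \eqref{Buch} together with monotonicity of $\Phi(t,y)$ in $t$ and continuity of $\omega$, but you should say so explicitly rather than cite \eqref{Buch} as stated. Second, your closing claim that the identity $s\omega(s)=\lim_{y\to\infty}\sum_{m\le y^{s-1},\,p^-(m)\ge y}1/m$ is ``the new representation flagged in the abstract'' overstates matters: since you derived it from Buchstab's theorem it is merely an integrated restatement of \eqref{Buch}, and it is not the representation \eqref{newBuch} that the paper advertises; the latter only emerges from the paper's combinatorial expansion of the inner sum.
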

Recalling the definition of the Buchstab function,
note that $V(s)\equiv s\omega(s)$ is the unique continuous function satisfying
$V(s)=1$, $1\le s\le 2$, and $V'(s)=\frac{V(s-1)}{s-1}$, for $s>2$.
In the proof of Theorem \ref{BuchstabforPN}, we actually show that \eqref{Buchstab} holds with
$s\omega(s)$ on the right hand side replaced by
\begin{equation*}
v(s)\equiv\sum_{L=1}^{[s]}\Lambda_L(s),
\end{equation*}
where
\begin{equation}\label{Lambda}
\begin{aligned}
&\Lambda_1(s)=1,\ s\ge1;\\
 &\Lambda_2(s)=\int_1^{s-1}\frac{du_1}{u_1}=\log(s-1), \ s\ge2;\\
&\Lambda_L(s)=\int_{L-1}^{s-1}\int_{L-2}^{u_{L-1}-1}\cdots\int_1^{u_2-1}\prod_{j=1}^{L-1}
\frac{du_j}{u_j},\ s\ge L\ge3.
\end{aligned}
\end{equation}
Now  $\Lambda_L'(s)=\frac1{s-1}\Lambda_{L-1}(s-1)$, for $s\ge L\ge2$, while of course
$\Lambda_1'(s)=0$.
Thus, $v(s)=1$, for $1\le s\le 2$ and $v'(s)=\frac{v(s-1)}{s-1}$, for $s>2$.
This proves the following result.
\begin{proposition}\label{Buchrep}
\begin{equation}\label{newBuch}
s\omega(s)=1+\log(s-1)+\sum_{L=3}^{[s]}\int_{L-1}^{s-1}\int_{L-2}^{u_{L-1}-1}\cdots\int_1^{u_2-1}\prod_{j=1}^{L-1}
\frac{du_j}{u_j}, \ s\ge3.
\end{equation}
\end{proposition}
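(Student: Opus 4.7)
My plan is to show that
\begin{equation*}
v(s)\equiv\sum_{L=1}^{[s]}\Lambda_L(s)
\end{equation*}
coincides with $V(s)\equiv s\omega(s)$ on $[1,\infty)$, which immediately yields \eqref{newBuch} after extracting the $L=1,2$ terms as $1+\log(s-1)$. To do this I would verify that both functions are continuous on $[1,\infty)$, agree on $[1,2]$, and satisfy the same differential-delay equation $f'(s)=f(s-1)/(s-1)$ on each open interval $(L_0,L_0+1)$, $L_0\ge 2$; the standard uniqueness of the solution to this delayed initial value problem, obtained one unit interval at a time by integration, then forces $v=V$.

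For the base case, when $s\in[1,2)$ only $L=1$ contributes and $v(s)=1$, while $v(2)=1+\log 1=1$; this matches $V(s)=s\omega(s)=1$ on $[1,2]$. The central computation is the derivative recursion
\begin{equation*}
\Lambda_L'(s)=\frac{\Lambda_{L-1}(s-1)}{s-1},\qquad L\ge 2,\ s>L,
\end{equation*}
which I would obtain by recognizing that the inner $(L-2)$-fold iterated integral in the definition of $\Lambda_L$ is exactly $\Lambda_{L-1}(u_{L-1})$, so that
\begin{equation*}
\Lambda_L(s)=\int_{L-1}^{s-1}\Lambda_{L-1}(u)\,\frac{du}{u}
\end{equation*}
and the fundamental theorem of calculus gives the recursion. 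Summing over $L$ and re-indexing via $M=L-1$, for non-integer $s\in(L_0,L_0+1)$ with $L_0\ge 2$,
\begin{equation*}
v'(s)=\sum_{L=2}^{L_0}\frac{\Lambda_{L-1}(s-1)}{s-1}=\frac{1}{s-1}\sum_{M=1}^{L_0-1}\Lambda_M(s-1)=\frac{v(s-1)}{s-1},
\end{equation*}
where the last equality uses $[s-1]=L_0-1$ (valid since $s-1$ is not an integer).

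Continuity of $v$ at integer values $s=L_0\ge 2$ is automatic because the new term $\Lambda_{L_0}(s)$ introduced as $s$ crosses $L_0$ has $\Lambda_{L_0}(L_0)=0$ (the outermost integral $\int_{L_0-1}^{L_0-1}$ collapses). Thus $v$ and $V$ are continuous on $[1,\infty)$ with matching data on $[1,2]$ and each satisfy the integral equation $f(s)=f(L_0)+\int_{L_0}^s f(u-1)/(u-1)\,du$ on every successive interval $[L_0,L_0+1]$, so $v\equiv V$. There is no deep obstacle; the only care required is the index bookkeeping in the summation and the verification that the $\Lambda_L$ terms enter continuously at integer values of $s$, which is what makes the piecewise argument close up.
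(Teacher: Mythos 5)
Your proposal is correct and follows essentially the same route as the paper: you verify $\Lambda_L'(s)=\Lambda_{L-1}(s-1)/(s-1)$ (via $\Lambda_L(s)=\int_{L-1}^{s-1}\Lambda_{L-1}(u)\,du/u$), check $v=1$ on $[1,2]$ with continuity at the integers, and invoke the uniqueness of the continuous solution of $V'(s)=V(s-1)/(s-1)$ characterizing $s\omega(s)$. The only difference is that you spell out the interval-by-interval bookkeeping and the vanishing of the newly entering term $\Lambda_{L_0}(L_0)=0$, details the paper leaves implicit.
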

The representation of the Buchstab function $\omega$ in \eqref{newBuch}  seems to be  new.
It is simpler than the following known representation \cite{ABT, L}:
$$
s\omega(s)=1+\sum_{L=2}^{[s]}\frac1{L!}\int_{\stackrel{\frac1s\le y_j\le 1}{\frac1s\le 1-(y_1+y_2+\cdots+y_{L-1})\le 1)}}\frac1{1-(y_1+y_2+\cdots+y_{L-1})}\prod_{j=1}^{L-1}
\frac{dy_j}{y_j}.
$$
Since  $\lim_{s\to\infty}\omega(s)=e^{-\gamma}$, we also obtain what seems to be
yet another representation of Euler's constant:
$$
e^{-\gamma}=\lim_{N\to\infty}\frac1N\sum_{L=3}^{N}\int_{L-1}^N\int_{L-2}^{u_{L-1}-1}\cdots\int_1^{u_2-1}\prod_{j=1}^{L-1}
\frac{du_j}{u_j}.
$$

We prove Proposition \ref{nat-log-indep} and Theorems \ref{Dickman}-\ref{BuchstabforPN} successively in sections
2-5 below.

\section{Proof of Proposition \ref{nat-log-indep}}
For the proof of the proposition we need the following result which is obviously known; however, as
we were unable to find it in a number theory text, we  supply a proof in the appendix.

\begin{proposition}\label{Skbetap}
For $1\le l<k$,
\begin{equation}\label{Skbetapj}
D_{\text{nat}}(\beta_{p_j}\ge l|S_k)\equiv\frac{D_{\text{nat}}(\{\beta_{p_j}\ge l\}~\cap S_k)}{D_{\text{nat}}(S_k)}=\frac{(\frac1{p_j})^l-(\frac1{p_j})^k}{1-(\frac1{p_j})^k}.
\end{equation}
\end{proposition}
\bf\noindent  Remark.\rm\
When $k=2$ and $l=1$, \eqref{Skbetapj} becomes \newline
$D_{\text{nat}}(\beta_{p_j}\ge1|S_2)=\frac1{1+p_j}$. That is, among square-free numbers, the natural density of those divisible by the prime $p_j$ is $\frac1{p_j+1}$.
\medskip

\noindent \it Proof of Proposition \ref{nat-log-indep}.\rm\
In light of Proposition \ref{IN}, it follows immediately that for $l\le N$, the random vector $\{\beta_{p_j}\}_{j=1}^l$ under $P_N$
has the distribution of $\{X_{p_j}\}_{j=1}^l$ under $P$, this latter distribution being the weak limit as $N\to\infty$ of the  distribution of
$\{\beta_{p_j}\}_{j=1}^l$ on $[N]$ with the uniform distribution.
From this it follows that $D_{\text{log-indep}}$ and $D_{\text{nat}}$ coincide on the algebra of  sets  generated by the inverse images
of the $\{\beta_{p_j}\}_{j=1}^\infty$.
%it has   independent components distributed according to  the geometric distributions $\{\text{Geom}(1-\frac1{p_j})\}_{j=1}^l$.

It is well-known that $D_{\text{nat}}(S_k)=\frac1{\zeta(k)}$, where $\zeta(s)=\sum_{n=1}^\infty\frac1{n^s}$ is the Riemann zeta function \cite{S}.
On the other hand, by Proposition \ref{IN} we have
$$
P_N(S_k)=P(X_{p_j}<k, j\in[N])=\prod_{j=1}^N P(X_j<k)=\prod_{j=1}^N(1-\frac1{p_j^k}),
$$
and so by the Euler product formula we conclude that
$$
D_{\text{log-indep}}(S_k)=\lim_{N\to\infty}P_N(S_k)=\lim_{N\to\infty}\prod_{j=1}^N(1-\frac1{p_j^k})=
\frac1{\zeta(k)}.
$$
Thus, the two densities coincide on the algebra generated by $\{S_k\}_{k=2}^\infty$.

Also, for $j\le N$, $k\ge2$  and  $l<k$, we have
\begin{equation*}\label{SkAp}
\begin{aligned}
&P_N^{(k)}(\beta_{p_j}\ge l)=P_N(\beta_{p_j}\ge l|S_k)=P\big(X_{p_j}\ge l|X_{p_i}<k, i=1,\ldots, N\big)=\\
&P(X_{p_j}\ge l|X_{p_j}<k)=
\frac{\sum_{i=l}^{k-1}(\frac1{p_j})^i(1-\frac1{p_j})}
{\sum_{i=0}^{k-1}(\frac1{p_j})^i(1-\frac1{p_j})}=\frac{(\frac1{p_j})^l-(\frac1{p_j})^k}{1-(\frac1{p_j})^k}.
\end{aligned}
\end{equation*}
Thus,
$D_{\text{log-indep}}(\beta_{p_j}\ge l|S_k)\equiv\frac{D_{\text{log-indep}}(\{\beta_{p_j}\ge l\}\cap S_k)}{D_{\text{log-indep}}(S_k)}=\frac{(\frac1{p_j})^l-(\frac1{p_j})^k}{1-(\frac1{p_j})^k}$.
Recalling Proposition \ref{Skbetap},
we conclude that the two densities indeed coincide on the algebra generated by the inverse images of $\{\beta_{p_j}\}_{j=1}^\infty$ and the sets $\{S_k\}_{k=2}^\infty$.
\hfill$\square$

\section{Proof of Theorem \ref{Dickman}}\label{sec-Dick}
We first prove the theorem for $P_N$. Let $E_N$ denote the expectation with respect to $P_N$.
Using Proposition \ref{IN}, we have
$$
E_N\frac{\log n}{\log N}=\frac1{\log N}\sum_{j=1}^NEX_{p_j}\log p_j=\frac1{\log N}\sum_{j=1}^N\frac{\log p_j}{p_j-1}.
$$
Mertens' first theorem \cite{N} states that $\sum_{p\le N}\frac{\log p}p\sim\log N$, where the sum is over all primes less than or equal to $N$.
Thus, using nothing more than the trivial bound $p_N\le N^k$, for some $k$, it follows that
$\{E_N\frac{\log n}{\log N}\}_{N=1}^\infty$ is bounded, and therefore
  that the distributions of the nonnegative random variables  $\{\frac{\log n}{\log N}\}_{N=1}^\infty$ under $\{P_N\}_{N=1}^\infty$
 are tight. In the next paragraph we will prove that their Laplace transforms converge
 to $\exp(-\int_0^1\frac{1-e^{-tx}}xdx)$.
 This proves that the distributions converge weakly. By the argument in  the paragraph containing \eqref{DNeq}, it then follows that the limiting distribution is the Dickman distribution.
 Alternatively,  the above function is known to be the Laplace transform of the Dickman distribution \cite{MV, Tene}.

%We calculate the Laplace transform of the random variable $\frac{\log n}{\log N}$ under
%$P_N$, or equivalently
By Proposition \ref{IN},  we have for $t\ge0$,
\begin{equation}\label{LT}
E_N\exp(-t\frac{\log n}{\log N})=E\exp(-\frac t{\log N}\sum_{j=1}^NX_{p_j}\log p_j)=
\prod_{j=1}^NE\exp(-\frac {t\log p_j}{\log N}X_{p_j}).
\end{equation}
For $s\ge0$,
\begin{equation}\label{calc1}
\begin{aligned}
&E\exp(-sX_{p_j})=\sum_{k=0}^\infty e^{-sk}(\frac1{p_j})^k(1-\frac1{p_j})=
(1-\frac1{p_j})\frac1{1-\frac{e^{-s}}{p_j}}=
\frac1{1+\frac{1-e^{-s}}{p_j-1}}.
\end{aligned}
\end{equation}
From \eqref{LT} and \eqref{calc1} we have
\begin{equation}
\log E_N\exp(-t\frac{\log n}{\log N})=-\sum_{j=1}^N\log\big(1+\frac{1-\exp(-t\frac{\log p_j}{\log N})}{p_j-1}\big).
\end{equation}
Now $x-\frac{x^2}2\le\log (1+x)\le x$, for $x\ge0$, and by the bounded convergence theorem,
$\lim_{N\to\infty}\sum_{j=1}^N\Big( \frac{1-\exp(-t\frac{\log p_j}{\log N})}{p_j-1}\Big)^2=0$;
thus,
\begin{equation}\label{loglim}
\lim_{N\to\infty}\log E_N\exp(-t\frac{\log n}{\log N})=-\lim_{N\to\infty}\sum_{j=1}^N
 \frac{1-\exp(-t\frac{\log p_j}{\log N})}{p_j-1}.
\end{equation}

Let $x^{(N)}_j=\frac{\log p_j}{\log N}$ and  $\Delta^{(N)}_j=x^{(N)}_{j+1}-x^{(N)}_j$.
By the PNT, $p_j\sim j\log j$, as $j\to\infty$; thus
\begin{equation}\label{keyforchebthm}
\log p_{j+1}-\log p_j\sim\log\frac{(j+1)\log(j+1)}{j\log j}=\log\big((1+\frac1j)(1+\frac{\log(1+\frac1j)}{\log j})\big)\sim\frac1j\sim\frac{\log p_j}{p_j}.
\end{equation}
Consequently,
\begin{equation}\label{mesh}
\Delta^{(N)}_j\sim\frac{
\log p_j}{p_j\log N},\ \text{uniformly  as }\  j,N\to\infty.
\end{equation}
 Note also that
\begin{equation}\label{endpts}
\lim_{N\to\infty} x^{(N)}_1=0, \ \lim_{N\to\infty}x^{(N)}_N=1.
\end{equation}
We rewrite  the summand on the right hand side of \eqref{loglim}
as
\begin{equation}\label{R-int}
\begin{aligned}
&\sum_{j=1}^N
 \frac{1-\exp(-t\frac{\log p_j}{\log N})}{p_j-1}=\sum_{j=1}^N
 \frac{1-\exp(-t\frac{\log p_j}{\log N})}{\frac{\log p_j}{\log N}} \frac{\log p_j}{(p_j-1)\log N}=\\
& \sum_{j=1}^N
 \frac{1-\exp(-tx^{(N)}_j)}{x^{(N)}_j} \frac{\log p_j}{(p_j-1)\log N}.
\end{aligned}
\end{equation}
From \eqref{mesh}-\eqref{R-int} along with \eqref{loglim} we conclude that
\begin{equation}\label{finalthm1}
\lim_{N\to\infty}E_N\exp(-t\frac{\log n}{\log N})=\exp(-\int_0^1\frac{1-e^{-tx}}xdx).
\end{equation}
This completes the proof of the  theorem for $P_N$.

We now turn to $P_N^{(k)}$.  Let $E_N^{(k)}$ denote the expectation with respect to $P_N^{(k)}$.
By Proposition \ref{conddist},
\begin{equation}\label{LT-k}
E^{(k)}_N\exp(-t\frac{\log n}{\log N})=E\exp(-\frac t{\log N}\sum_{j=1}^NX^{(k)}_{p_j}\log p_j)=
\prod_{j=1}^NE\exp(-\frac {t\log p_j}{\log N}X^{(k)}_{p_j}).
\end{equation}
For $s\ge0$,
\begin{equation}\label{calc2}
\begin{aligned}
&E\exp(-sX^{(k)}_{p_j})=\sum_{l=0}^{k-1} e^{-sl}(\frac1{p_j})^l\frac{1-\frac1{p_j}}{1-(\frac1{p_j})^k}=
\frac{1-\frac1{p_j}}{1-(\frac1{p_j})^k}
\frac{1-(\frac{e^{-s}}{p_j})^k}{1-\frac{e^{-s}}{p_j}}.
\end{aligned}
\end{equation}
Comparing the equality between the first and third expressions in
\eqref{calc1} with \eqref{calc2}, we have
\begin{equation}\label{EsEsk}
E\exp(-sX^{(k)}_{p_j})=\frac{1-(\frac{e^{-s}}{p_j})^k}{1-(\frac1{p_j})^k}E\exp(-sX_{p_j})=\Big(1+\frac{(\frac1{p_j})^k(1-e^{-sk})}{1-(\frac1{p_j})^k}\Big)E\exp(-sX_{p_j}).
\end{equation}
Thus, from \eqref{LT}, \eqref{LT-k} and \eqref{EsEsk} we have
\begin{equation}\label{compareEEk}
E^{(k)}_N\exp(-t\frac{\log n}{\log N})=E_N\exp(-t\frac{\log n}{\log N})\prod_{j=1}^N\big(1+\frac{(\frac1{p_j})^k\big(1-\exp(-\frac{kt\log p_j}{\log N})\big)}{1-(\frac1{p_j})^k}\big).
\end{equation}
By the bounded convergence theorem,
\begin{equation}\label{negl}
\lim_{N\to\infty}\sum_{j=1}^N\frac{(\frac1{p_j})^k\big(1-\exp(-\frac{kt\log p_j}{\log N})\big)}{1-(\frac1{p_j})^k}=0.
\end{equation}
Thus, from \eqref{finalthm1}, \eqref{compareEEk} and \eqref{negl}, we conclude that
\begin{equation*}\label{finalthm1k}
\lim_{N\to\infty}E_N^{(k)}\exp(-t\frac{\log n}{\log N})=\exp(-\int_0^1\frac{1-e^{-tx}}xdx).
\end{equation*}
\hfill $\square$
\section{Proof of Theorem \ref{DickmanforPN}}
We prove the theorem for $P_N$; the proof for $P_N^{(k)}$ is done analogously.
For definiteness and convenience, we define $\frac{\log n}{\log p^+(n)}|_{n=1}=0$.
Let
$$
J^+_N=\max\{j\in[N]:X_{p_j}\neq0\},
$$
 with $\max\emptyset$ defined to be 0.
By Proposition \ref{IN},
$\frac{\log n}{\log p^+(n)}$ under $P_N$ is equal in distribution to $\frac{1_{\{J^+_N\neq0\}}}{\log p_{J^+_N}}\sum_{j=1}^N X_{p_j}\log p_j$.
On $\{J^+_N\neq0\}$, we write
$$
\frac1{\log p_{J^+_N}}\sum_{j=1}^N X_{p_j}\log p_j=\frac1{\log p_{J^+_N}}\sum_{j=1}^{J^+_N-1}X_{p_j}\log p_j+X_{p_{J^+_N}}.
$$
As noted in the paragraph containing \eqref{DNeq}, $J^+_N\to\infty$ a.s. as $N\to\infty$.
Also, by the independence of $\{X_{p_j}\}_{j=1}^\infty$, we have $\sum_{j=1}^{J^+_N-1}X_{p_j}\log p_j|\{J^+_N=j_0\}\stackrel{\text{dist}}{=}\sum_{j=1}^{j_0-1}X_{p_j}\log p_j$.
 Thus, it follows from Theorem \ref{Dickman} that
$\frac1{\log J^+_N}\sum_{j=1}^{J^+_N-1}X_{p_j}\log p_j$
 converges weakly to the Dickman distribution. By the PNT, $p_{J^+_N}\sim J^+_N\log J^+_N$; thus also
$\frac1{\log p_{J^+_N}}\sum_{j=1}^{J^+_N-1}X_{p_j}\log p_j$ a.s.
converges weakly to the Dickman distribution.

Note that $X_{p_{J^+_N}}$ conditioned on $\{J^+_N=j_0\}$ is distributed as
$X_{p_{j_0}}$ conditioned on $\{X_{p_{j_0}}\ge1\}$. A trivial calculation shows that $X_{p_{j_0}}$ conditioned on $\{X_{p_{j_0}}\ge1\}$ converges weakly to 1 as $j_0\to\infty$;
thus,  $X_{p_{j_N}}$ converges  weakly to 1. Consequently, $\frac{\log n}{\log p^+(n)}$ under $P_N$ converges weakly to $D+1$ as $N\to\infty$.
\hfill $\square$

\section{Proof of Theorem \ref{BuchstabforPN}}
As noted after the statement of the theorem, we will prove \eqref{Buchstab} with $s\omega(s)$ replaced by $\sum_{L=1}^{[s]}\Lambda_L(s)$, where $\Lambda_L$ is as in \eqref{Lambda}.
That is, we will prove that
\begin{equation}\label{goal}
P_N(\frac{\log n}{\log p^+(n)}\le s)\sim (e^{-\gamma}\log\log N)\frac{\sum_{L=1}^{[s]}\Lambda_L(s)}{\log N},\ s\ge1.
\end{equation}
We will first prove \eqref{goal}  for $s\in[1,2] $, then for $s\in[2,3]$, and then for $s\in[3,4]$. After treating these three particular cases, an inductive argument for the general case of $s\in[L,L+1]$ will be explained
succinctly.

For definiteness and convenience, we define $\frac{\log n}{\log p^-(n)}|_{n=1}=0$.
Of course, $\frac{\log n}{\log p^-(n)}\ge1$, for $n\ge2$.
Let
$$
J^-_N=\min\{j\in[N]:X_{p_j}\neq0\},
$$
 with $\min\emptyset$ defined to be 0.
Note that by \eqref{Mertensform},
\begin{equation}\label{trivialcase}
P(\frac{\log n}{\log p^-(n)}<1)=P(J_N^-=0)=C_N^{-1}\sim \frac{e^{-\gamma}}{\log N}.
\end{equation}
By Proposition \ref{IN},
$\frac{\log n}{\log p^-(n)}$ under $P_N$ is equal in distribution to $\frac{1_{\{J^-_N\neq0\}}}{\log p_{J^-_N}}\sum_{j=1}^N X_{p_j}\log p_j$.
Thus, we have
\begin{equation}\label{keyformula}
\begin{aligned}
&P_N(L\le\frac{\log n}{\log p^-(n)}\le s)=\\
&\sum_{a=1}^NP\Big(L\log p_a\le\sum_{j=a}^N  X_{p_j}\log p_j\le s\log p_a|J_N^-=a\Big)P(J_N^-=a),\ \text{for}\ L\in\mathbb{N},
\end{aligned}
\end{equation}
and
\begin{equation}\label{JNprob}
P(J_N^-=a)=\frac1{p_a}\prod_{j=1}^{a-1}(1-\frac1{p_j}).
\end{equation}

Under
the conditioning $\{J_N^-=a\}$, the  random variables $\{X_{p_j}\}_{j=a}^N$ are still independent,
and for $j>a$, $X_{p_j}$ is distributed as before, namely according to Geom($1-\frac1{p_j}$); however $X_{p_a}$ is now distributed
as a Geom($1-\frac1{p_a}$) random variable  conditioned to be positive.

 Consider first $L=1$ and  $s\in[1,2]$. For $s\neq2$, the inequality $\log p_a\le\sum_{j=a}^N X_{p_j}\log p_j\le s\log p_a$
in \eqref{keyformula} under the conditional probability $P(~\cdot~|J_N^-=a)$  will hold if and only if $X_{p_a}=1$ and $X_{p_j}=0$, for $a+1\le j\le N$.
For $s=2$ it will hold if and only if  $X_{p_a}$ is equal to either 1 or 2 and $X_{p_j}=0$, for $a+1\le j\le N$.
Thus, we have
\begin{equation}\label{s12}
\begin{aligned}
&P\Big(\log p_a\le \sum_{j=a}^N X_{p_j}\log p_j\le s\log p_a|J_N^-=a\Big)=\\
 &\begin{cases}
 \prod_{j=a}^N(1-\frac1{p_j}),\ s\in[1,2);\\
 \prod_{j=a}^N(1-\frac1{p_j})+\frac1{p_a}\prod_{j=a}^N(1-\frac1{p_j}),\ s=2.\end{cases}
\end{aligned}
\end{equation}
From \eqref{trivialcase}-\eqref{s12}, along with \eqref{Mertensform} and \eqref{Mertens2} and the fact that $\Lambda_1(s)\equiv1$ for $s\in[1,2]    $,
we obtain
\begin{equation}\label{finals12}
P_N(\frac{\log n}{\log p^-(n)}\le s)\sim C_N^{-1}\sum_{a=1}^N\frac1{p_a}\sim (e^{-\gamma}\log\log N)~\frac{\Lambda_1(s)}{\log N},\ s\in[1,2].
\end{equation}

Now consider $L=2$ and $s\in[2,3]$.
%The analysis for the case $s=3$ is slightly different than the analysis for $s\in[2,3)$,
%just as in the above paragraph the analysis for $s=2$ was slightly different than for the case $s\in[1,2)$.   In order to avoid
%this complication, we will assume in the calculations that $s\in[2,3)$.
%Similar to what occurred in the case above with $s\in[1,2]$. the expression for the leading order asymptotics for $s=3$ is obtaining by letting $s\to3^-$ in the term
%for the leading order asymptotics for $s\in[2,3)$.
%\begin{equation}\label{keyformula2}
%P_N(2\le\frac{\log n}{\log p^-(n)}\le s)=\sum_{a=1}^NP_N\Big(2\log p_a\le \sum_{j=a}^N X_{p_j}\log p_j\le s\log p_a|J_N^-=a\Big)P_N(J_N^-=a).
%\end{equation}
 Let
$$
J_{a,1}(s)=\max\{j:p_j\le p_a^{s-1}\}.
$$
(Note that $J_{a,1}(s)\ge a$, for $s\ge2$.) Then for $s\in[2,3)$,
 the inequality $2\log p_a\le\sum_{j=a}^N X_{p_j}\log p_j\le s\log p_a$ in \eqref{keyformula} under the conditional probability $P(~\cdot~|J_N^-=a)$
 will hold if and only if either $X_{p_a}=2$ and $X_{p_j}=0$ for $a+1\le j\le N$, or
 $X_{p_a}=1$, $X_{p_j}=1$ for exactly one  $j$ satisfying $a+1\le j\le J_{a,1}(s)\wedge N$, and
 $X_{p_j}=0$ for all other $j$ satisfying $a+1\le j\le N$. Thus,
we have
\begin{equation}\label{s23}
\begin{aligned}
& P\Big(2\log p_a\le \sum_{j=a}^N X_{p_j}\log p_j\le s\log p_a|J_N^-=a\Big)=\\
 &\frac1{p_a}\prod_{j=a}^N(1-\frac1{p_a})+\sum_{l=a+1}^{J_{a,1}(s)\wedge N}\frac1{p_l}\prod_{j=a}^N(1-\frac1{p_j}),
 \ s\in[2,3),
 \end{aligned}
 \end{equation}
where, of course, the sum on the right hand side above is interpreted as 0 if $J_{a,1}(s)=a$.
For the case $s=3$, there is also the possibility of $X_{p_a}=3$ and $X_{p_j}=0$ for $a+1\le j\le N$. The  $P(~\cdot~|J_N^-=a)$-probability of this is
$\frac1{p_a^2}\prod_{j=a}^N(1-\frac1{p_a})$.
Thus, with $s=3$, \eqref{s23} has the additional term $\frac1{p_a^2}\prod_{j=a}^N(1-\frac1{p_a})$ on the right hand side. However, this term  does not contribute to the leading order
asymptotics.
From \eqref{keyformula}, \eqref{JNprob} and  \eqref{s23},
% along with \eqref{Mertensform} and \eqref{Mertens2},
we obtain
\begin{equation}\label{forfinals23}
P_N(2\le\frac{\log n}{\log p^-(n)}\le s)=C_N^{-1}\big(\sum_{a=1}^N\frac1{p^2_a}
+\sum_{a=1}^N\frac1{p_a}\sum_{l=a+1}^{J_{a,1}(s)\wedge N}\frac1{p_l}\big),\ s\in[2,3).
%\sim (e^{-\gamma}\log\log N)~\frac1{\log N},\ s\in[2,3).
\end{equation}

Since $p_a\sim a\log a$ as $a\to\infty$, it follows that
\begin{equation}\label{Ja1asymp}
J_{a,1}(s)\log J_{a,1}(s)\sim(a\log a)^{s-1},\  \text{as}\ a\to\infty.
\end{equation}
Taking the logarithm of each side in \eqref{Ja1asymp}, we obtain
\begin{equation}\label{logfraclim}
\lim_{a\to\infty}\frac{\log J_{a,1}(s)}{\log a}=s-1.
\end{equation}
Using Mertens' second theorem in the form \eqref{Mertens2nd} along with the fact that $p_j\sim j\log j$, we have
\begin{equation}\label{logfraclog}
\sum_{l=a+1}^{J_{a,1}(s)}\frac1{p_l}\sim \log\log\big(J_{a,1}(s)\log J_{a,1}(s)\big)-\log\log( a\log a)\sim
\log\frac{\log J_{a,1}(s)}{\log a},\ \text{as}\ a\to\infty,
\end{equation}
and thus, by \eqref{logfraclim},
\begin{equation}\label{almostfinal23}
\lim_{a\to\infty}\sum_{l=a+1}^{J_{a,1}(s)}\frac1{p_l}=\log(s-1).
\end{equation}

Now choose any $b\in(0,\frac1s)$. Then $(N^b\log  N^b)^s<N$ for all large  $N$.
By \eqref{Ja1asymp},
\begin{equation}\label{b}
J_{a,1}(s)\le N,\ \text{for}\ a\le N^b\ \text{and sufficiently large}\ N.
\end{equation}
By Mertens' second theorem in the form \eqref{Mertens2}, we have
\begin{equation}\label{breakingupsum}
\sum_{a=1}^N\frac1{p_a}=\sum_{a=1}^{N^b}\frac1{p_a}+O(1)\sim\log\log N.
%\sum_{a=N^b+1}^N\frac1{p_a}\sim \log\log N.
\end{equation}
From \eqref{almostfinal23}-\eqref{breakingupsum}, we obtain
\begin{equation}\label{paps}
\sum_{a=1}^N\frac1{p_a}\sum_{l=a+1}^{J_{a,1}(s)\wedge N}\frac1{p_l}\sim
\sum_{a=1}^{N^b}\frac1{p_a}\sum_{l=a+1}^{J_{a,1}(s)}
\frac1{p_l}\sim(\log\log N)\log(s-1).
\end{equation}
Recalling the asymptotic behavior of $C_N$, recalling from \eqref{Lambda} that $\Lambda_2(s)=\log(s-1)$ for $s\ge2$, and using
 \eqref{forfinals23} and \eqref{paps}, we conclude that
\begin{equation}\label{finals23}
P_N(2\le\frac{\log n}{\log p^-(n)}\le s)\sim(e^{-\gamma}\log\log N)\frac{\Lambda_2(s)}{\log N},\ s\in[2,3],
\end{equation}
where the inclusion of the right endpoint $s=3$ follows from the remarks made after \eqref{s23}.
From  \eqref{finals12} with $s=2$ and \eqref{finals23}, along with the fact that $\Lambda_1(s)\equiv1$,  we obtain
\begin{equation}\label{finals13}
P_N(\frac{\log n}{\log p^-(n)}\le s)\sim(e^{-\gamma}\log\log N)\frac{\Lambda_1(s)+\Lambda_2(s)}{\log N}, \ s\in[2,3].
\end{equation}

Now consider $L=3$ and $s\in[3,4]$. In fact we will work with $s\in[3,4)$ since the case $s=4$ is slightly different but leads to the same
asymptotics, similar to the remarks after \eqref{s23}.
Then the inequality $3\log p_a\le\sum_{j=a}^N X_{p_j}\log p_j\le s\log p_a$ in \eqref{keyformula} under the conditional probability $P(~\cdot~|J_N^-=a)$
 will hold if and only if one of the following four
 situations obtains:
\begin{equation}\label{situations}
\begin{aligned}
& (1)\ X_{p_a}=3;  X_{p_j}=0, \ \text{for}\ a+1\le j\le N.\\
& (2)\
 X_{p_a}=2; X_{p_j}=1\ \text{ for exactly one}\  j \ \text{satisfying} \ a+1\le j\le J_{a,1}(s-1)\wedge N;\\
&X_{p_j}=0 \ \text{for all other}\ j \ \text{satisfying}\  a+1\le j\le N.\\
&(3)\ X_{p_a}=1; X_{p_j}=1\ \text{ for exactly one}\  j \ \text{satisfying}\ J_{a,1}(3)< j\le J_{a,1}(s)\wedge N;\\
& X_{p_j}=0\ \text{ for all other}\ j \ \text{satisfying}\ a+1\le j\le N.\\
& (4)\ X_{p_a}=1;\ \text{there exist}\ j_1,j_2,\ \text{ satisfying} \ a+1\le j_1\le j_2\le N\ \text{ and}\ p_{j_1}p_{j_2}\le p_a^{s-1},\\
& \text{ such that}\ X_{j_1}=X_{j_2}=1,
\ \text{ if}\ j_1\neq j_2\ \text{ and}\ X_{j_1}=2\ \text{if}\ j_1=j_2;\\
& X_{p_j}=0\ \text{ for all other}\ j\ \text{ satisfying}\ a+1\le j\le N.
\end{aligned}\end{equation}
Because $\sum_{a=1}^\infty \frac1{p_a^2}<\infty$,
the probabilities from situations (1) and (2) in \eqref{situations} do not contribute to the leading order asymptotics
of  $P_N(3\le\frac{\log n}{\log p^-(n)}\le s)$,
 just as in the case $L=2$ and $s\in[2,3)$, the probability from the case $X_{p_a}=2$ did not
contribute to the leading order asymptotics there. (The  contribution there from the case $X_{p_a}=2$ is the term
$C_N^{-1}\sum_{a=1}^N\frac1{p_a^2}$ in \eqref{forfinals23}.)
%This is because the probability from (1) will have a factor   $\frac1{p_a^3}$ and the probability from (2) will have a factor $\frac1{p_a^2}$, and these are summable over $a$.

The analysis of the contribution  from situation (3) in \eqref{situations} follows the same line of analysis as above when $L=2$ and $s\in[2,3)$ for the case
 $X_{p_a}=1$, $X_{p_j}=1$ for exactly one  $j$ satisfying $a+1\le j\le J_{a,1}(s)\wedge N$, and
 $X_{p_j}=0$ for all other $j$ satisfying $a+1\le j\le N$. The difference is that there one had
 $X_{p_j}=1$ for exactly one  $j$ satisfying $a+1\le j\le J_{a,1}(s)\wedge N$, while here one has
 $X_{p_j}=1$ for exactly one  $j$ satisfying $J_{a,1}(3)< j\le J_{a,1}(s)\wedge N$.
Thus, whereas the corresponding contribution there was the term $\sum_{a=1}^N\frac1{p_a}\sum_{l=a+1}^{J_{a,1}(s)\wedge N}\frac1{p_l}$ in \eqref{forfinals23}, the contribution here will be
$\sum_{a=1}^N\frac1{p_a}\sum_{l=J_{a,1}(3)+1}^{J_{a,1}(s)\wedge N}\frac1{p_l}$.
Similar to \eqref{logfraclog}, we have
$\sum_{l=J_{a,1}(3)+1}^{J_{a,1}(s)\wedge N}\frac1{p_l}\sim\log\frac{\log J_{a,1}(s)}{\log J_{a,1}(3)}$,
and from \eqref{logfraclim} we have
$\lim_{a\to\infty}\frac{\log J_{a,1}(s)}{\log J_{a,1}(3)}=\frac{s-1}{3-1}=\frac{s-1}2$.
Thus, similar to \eqref{paps}, we obtain
\begin{equation}\label{papsagain}
\sum_{a=1}^N\frac1{p_a}\sum_{l=J_{a,1}(3)+1}^{J_{a,1}(s)\wedge N}\frac1{p_l}\sim
\sum_{a=1}^{N^b}\frac1{p_a}\sum_{l=J_{a,1}(3)+1}^{J_{a,1}(s)}
\frac1{p_l}\sim(\log\log N)\big(\log (s-1)-\log2\big).
\end{equation}
And finally, similar to \eqref{finals23}, the contribution to $P_N(3\le\frac{\log n}{\log p^-(n)}\le s)$    from situation (3), which we denote by $\rho_3(s)$, satisfies
\begin{equation}\label{rho3}
\rho_3(s)\sim(e^{-\gamma}\log\log N)\frac{\Lambda_2(s)-\Lambda_2(3)}{\log N}, \ s\in[3,4],
\end{equation}
where the inclusion of the right endpoint $s=4$ follows from the remarks made at the beginning of the treatment of the case $s\in[3,4]$.

We know analyze the contribution    from situation (4) in  \eqref{situations}.
From \eqref{keyformula} and \eqref{JNprob}, the contribution  to $P_N(3\le\frac{\log n}{\log p^-(n)}\le s)$    from situation (4), which we will denote by $\rho_4(s)$, is
\begin{equation}\label{4contrib}
\rho_4(s)=C_N^{-1}\sum_{a=1}^N\frac1{p_a}\sum_{\stackrel{a+1\le j_1\le j_2\le N}{ p_{j_1}p_{j_2}\le p_a^{s-1}}}\frac1{p_{j_1}p_{j_2}}.
\end{equation}
Define
$$
J_a(s,j_1)=\max\{j:p_j\le\frac{p_a^{s-1}}{p_{j_1}}\},\ \ \   J_{a,2}(s)=\max\{j:p_j^2\le p_a^{s-1}\}.
$$
Then
\begin{equation}\label{rho4eq}
\rho_4(s)=C_N^{-1}\sum_{a=1}^N\frac1{p_a}\sum_{j_1=a+1}^{J_{a,2}(s)\wedge N}\frac1{p_{j_1}}\sum_{j_2=j_1}^{J_a(s,j_1)\wedge N}\frac1{p_{j_2}}.
\end{equation}

Since $p_j\sim j\log j$, it follows that
$J_{a,2}(s)\log J_{a,2}(s)\sim(a\log a)^{\frac{s-1}2}$, as $a\to\infty$.
Taking the logarithm of both sides above, it follows that
$\log J_{a,2}(s)\sim\frac{s-1}2\log a$ as $a\to\infty$.
Thus
\begin{equation}\label{asympJa2}
J_{a,2}(s)\sim \frac2{s-1}a^{\frac{s-1}2}(\log a)^{\frac{s-3}2},\ \text{as}\ a\to\infty.
\end{equation}

Consider now $J_a(s,j_1)$, for $a+1\le j_1\le J_{a,2}(s)$.
Similarly as in the  above paragraph, it follows that
$J_a(s,j_1)\log J_a(s,j_1)\sim\frac{(a\log a)^{s-1}}{j_1\log j_1}$, as $j_1,a\to\infty$.
Since $j_1\le J_{a,2}(s)$, it follows from \eqref{asympJa2} that
$j_1=o(a^{s-1})$. Thus,
taking the logarithm of both sides above,
we have
\begin{equation}\label{logsim}
\log J_a(s,j_1)\sim(s-1)\log a-\log j_1, \ \text{as}\ j_1,a\to\infty.
\end{equation}
Therefore,
\begin{equation}\label{asympJasj1}
J_a(s,j_1)\sim\frac{a^{s-1}(\log a)^{s-1}}{j_1\log j_1\big((s-1)\log a-\log j_1\big)},\ \text{as}\ j_1,a\to\infty.
\end{equation}

In light of \eqref{asympJa2} and \eqref{asympJasj1}, we can choose $b\in(0,1)$, depending on $s$,
such that $J_a(s,j_1)\le N$ and $J_{a,2}(s)\le N$, for all $a\le N^b$ and all sufficiently large $N$.
Thus, from \eqref{breakingupsum} and \eqref{rho4eq} , we have, similar to the first asymptotic equivalence in \eqref{paps},
\begin{equation}\label{papjpj}
\rho_4(s)\sim C_N^{-1}\sum_{a=1}^{N^b}\frac1{p_a}\sum_{j_1=a+1}^{J_{a,2}(s)}
\frac1{p_{j_1}}\sum_{j_2=j_1}^{J_a(s,j_1)}\frac1{p_{j_2}}.
\end{equation}

By \eqref{Mertens2} and \eqref{logsim}, we have
\begin{equation}\label{innersum}
\begin{aligned}
&\sum_{j_2=j_1}^{J_a(s,j_1)}\frac1{p_{j_2}}\sim\log\frac{\log J_a(s,j_1)}{\log j_1}\sim
\log\frac{(s-1)\log a-\log j_1}{\log j_1}=\log\big((s-1)\frac{\log a}{\log j_1}-1\big), \\
&\text{as}\
j_1, a\to\infty.
\end{aligned}
\end{equation}
Using \eqref{innersum}, \eqref{asympJa2} and  the fact that $p_j\sim j\log j$ as $j\to\infty$,
we  have
\begin{equation}\label{integralform}
\begin{aligned}
&\sum_{j_1=a+1}^{J_{a,2}(s)}
\frac1{p_{j_1}}\sum_{j_2=j_1}^{J_a(s,j_1)}\frac1{p_{j_2}}\sim\sum_{j_1=a+1}^{J_{a,2}(s)}
\frac1{j_1\log j_1}\log\big((s-1)\frac{\log a}{\log j_1}-1\big)\sim\\
&\int_a^{J_{a,2}(s)}\frac1{x\log x}\log\big((s-1)\frac{\log a}{\log x}-1\big)dx\sim\\
&\int_a^{a^{\frac{s-1}2}}\frac1{x\log x}\log\big((s-1)\frac{\log a}{\log x}-1\big)dx, \ \text{as}\ a\to\infty,
\end{aligned}
\end{equation}
where the final asymptotic equivalence follows from the iterated logarithmic growth rate
of the indefinite integral of the integrand appearing in the equation.
Making the substitution
$$
x=a^{\frac{s-1}{u_2}}
$$
reveals that   the second integral in \eqref{integralform}
 is in fact independent of $a$. We obtain
 \begin{equation}\label{integralindepofa}
\begin{aligned}
&\int_a^{a^{\frac{s-1}2}}\frac1{x\log x}\log\big((s-1)\frac{\log a}{\log x}-1\big)dx= \int_2^{s-1}\frac{du_2}{u_2}\log(u_2-1)=\\
&\int_2^{s-1}\int_1^{u_2-1}\frac{du_1}{u_1}\frac{du_2}{u_2}=\Lambda_3(s).
\end{aligned}
\end{equation}
From \eqref{integralform} and \eqref{integralindepofa}, we conclude that
\begin{equation}\label{alimitlambda3}
\lim_{a\to\infty}\sum_{j_1=a+1}^{J_{a,2}(s)}
\frac1{p_{j_1}}\sum_{j_2=j_1}^{J_a(s,j_1)}\frac1{p_{j_2}}=\Lambda_3(s).
\end{equation}
Thus, recalling the asymptotic behavior of $C_N$,  from \eqref{alimitlambda3}, \eqref{papjpj} and \eqref{breakingupsum} we conclude  that
\begin{equation}\label{rho4}
\rho_4(s)\sim   (e^{-\gamma}\log\log N)\frac{\Lambda_3(s)}{\log N},\ s\in[3,4],
\end{equation}
where the inclusion of the right endpoint $s=4$ follows from the remarks made at the beginning of the treatment of the case $s\in[3,4]$.
From \eqref{rho3} and \eqref{rho4}, we conclude that
\begin{equation}\label{finals34}
P_N(3\le\frac{\log n}{\log p^-(n)}\le s)\sim(e^{-\gamma}\log\log N)
\frac{\big(\Lambda_2(s)-\Lambda_2(3)\big)+\Lambda_3(s)}{\log N},\ s\in[3,4].
\end{equation}
From \eqref{finals13} with $s=3$ and \eqref{finals34}, and recalling that $\Lambda_1(s)\equiv1$, we have
\begin{equation*}
P_N(\frac{\log n}{\log p^-(n)}\le s)\sim(e^{-\gamma}\log\log N)\frac{\Lambda_1(s)+\Lambda_2(s)+\Lambda_3(s)}{\log N},
\ s\in[3,4].
\end{equation*}
\medskip

We now consider the general case that $s\in [L,L+1]$.
By induction, we have
\begin{equation}\label{induction}
P_N(\frac{\log n}{\log p^-(n)}\le s)\sim(e^{-\gamma}\log\log N)\frac{\sum_{l=1}^{[s]}\Lambda_l(s)}{\log N},\ s\le L.
\end{equation}
 Making a list similar to \eqref{situations}, and analyzing the situations as was done there, one concludes that
the situations with $X_{p_a}\ge2$ do not contribute to the leading order asymptotics of $P_N(L\le\frac{\log n}{\log p^-(n)}\le s)$, while  the situations
with $X_{p_a}=1$ do contribute. When $X_{p_a}=1$,  we obtain $L-1$ situations, with
all but one of them of the form already treated in the case of  $L-1$.  (In \eqref{situations}, where $L=3$, there were  2 such situations---labeled there (3) and (4), and one
of them, namely (3), was of the form already treated for $L=2$.) Thus, by induction and by the argument used to show that the contribution from situation
(3) in \eqref{situations} is as it appears in \eqref{rho3},
  these terms will give asymptotic contributions
\begin{equation}\label{othersit}
(e^{-\gamma}\log\log N)\frac{\Lambda_1(s)-\Lambda_1(L)}{\log N},\ldots, (e^{-\gamma}\log\log N)\frac{\Lambda_{L-1}(s)-\Lambda_{L-1}(L)}{\log N}.
\end{equation}

We now look at the new situation that arises; namely
the one in which  $X_{p_a}=1$ and there exist $j_1,\ldots, j_{L-1}$ satisfying $a+1\le j_1\le\cdots\le j_{L-1}\le N$ and $\prod_{i=1}^{L-1}p_{j_i}\le p_a^{s-1}$,
such that for $j\in\{a+1,\ldots, N\}$, $X_j$ is equal to the number of times $j$ appears among the $\{j_i\}_{i=1}^{L-1}$.
From \eqref{keyformula} and \eqref{JNprob}, the contribution  to $P_N(L\le\frac{\log n}{\log p^-(n)}\le s)$    from this situation, similar to \eqref{4contrib} in the case $L=3$, is
\begin{equation}\label{Lcontrib}
C_N^{-1}\sum_{a=1}^N\frac1{p_a}\sum_{\stackrel{a+1\le j_1\le\cdots\le j_{L-1}\le N}{\prod_{i=1}^{L-1}p_{j_i}\le p_a^{s-1}}}\frac1{\prod_{i=1}^{L-1}p_{j_i}}.
\end{equation}
An analysis analogous to that implemented between
\eqref{4contrib} and
\eqref{alimitlambda3} gives
\begin{equation}\label{alimit}
\lim_{a\to\infty}\sum_{\stackrel{a+1\le j_1\le\cdots\le j_{L-1}\le N}{\prod_{i=1}^{L-1}p_{j_i}\le p_a^{s-1}}}\frac1{\prod_{i=1}^{L-1}p_{j_i}}
=\int_{L-1}^{s-1}\int_{L-2}^{u_{L-1}-1}\cdots\int_1^{u_2-1}\prod_{j=1}^{L-1}
\frac{du_j}{u_j}=\Lambda_L(s).
\end{equation}
From \eqref{Lcontrib} and \eqref{alimit} it follows that the contribution to the leading order asymptotics
of $P_N(L\le\frac{\log n}{\log p^-(n)}\le s)$
from  this situation is
$(e^{-\gamma}\log\log N)\frac{\Lambda_L(s)}{\log N}$.
We conclude from this and \eqref{othersit} that
\begin{equation}\label{newstep}
P_N(L\le\frac{\log n}{\log p^-(n)}\le s)\sim (e^{-\gamma}\log\log N)\frac{\Lambda_L(s)+\sum_{l=1}^{L-1}\big(\Lambda_l(s)-\Lambda_l(L)\big)}{\log N},\ s\in[L,L+1].
\end{equation}
From \eqref{induction} with $s=L$ and from \eqref{newstep}, we conclude that
$$
P_N(L\frac{\log n}{\log p^-(n)}\le s)\sim(e^{-\gamma}\log\log N)\frac{\sum_{l=1}^{L}\Lambda_l(s)}{\log N},\ s\in[L,L+1].
$$
This completes the proof of \eqref{goal}.
\hfill $\square$

\section{Appendix: Proof of Proposition \ref{Skbetap}}
For notational convenience, we will work with $p$ instead of $p_j$.
The proof is via the inclusion-exclusion principle along with the fact that
$D_{\text{nat}}(S_k)=\frac1{\zeta(k)}$, where
$S_k$ denotes the $k$-free integers,
as was noted with a reference in the proof of Proposition
\ref{nat-log-indep}.
Recall that $1\le l<k$.
We have
$$
\begin{aligned}
&I_N\equiv|\{n: p\thinspace^l|n,  n\le N, n\in S_k\}|=|\{n_1:n_1\le [\frac N{p\thinspace^l}], n_1\in S_k,p\thinspace^{k-l}\nmid n_1\}|=\\
&|\{n_1:n_1\le [\frac N{p\thinspace^l}], n_1\in S_k\}|-|\{n_1:n_1\le [\frac N{p\thinspace^l}], n_1\in S_k,p\thinspace^{k-l}|n_1\}|\equiv I_{N,1}-I_{N,2}.
\end{aligned}
$$
Similarly,
$$
\begin{aligned}
&I_{N,2}=|\{n_2:n_2\le [\frac N{p\thinspace^k}],n_2\in S_k, p\thinspace^l\nmid n_2\}|=\\
&|\{n_2:n_2\le [\frac N{p\thinspace^k}],n_2\in S_k\}|-|\{n_2:n_2\le [\frac N{p\thinspace^k}],n_2\in S_k, p\thinspace^l|n_2\}|\equiv I_{N,3}-I_{N,4},
\end{aligned}
$$
and
$$
\begin{aligned}
&I_{N,4}=|\{n_3:n_3\le [\frac N{p\thinspace^{k+l}}],n_3\in S_k, p\thinspace^{k-l}\nmid n_3\}|=\\
&|\{n_3:n_3\le [\frac N{p\thinspace^{k+l}}],n_3\in S_k\}|-|\{n_3:n_3\le [\frac N{p\thinspace^{k+l}}],n_3\in S_k,p\thinspace^{k-l}|n_3\}|\equiv I_{N,5}-I_{N,6}.
\end{aligned}
$$
So up to this point, we have
$$
I_N=
I_{N,1}-I_{N,3}+I_{N,5}-I_{N,6}.
$$
Now     $\lim_{N\to\infty}\frac{I_{N,1}}N=\frac1{p\thinspace^l}D_{\text{nat}}(S_k)=\frac1{p\thinspace^l\zeta(k)}$,
$\lim_{N\to\infty}\frac{I_{N,3}}N=\frac1{p\thinspace^k}D_{\text{nat}}(S_k)=\frac1{p\thinspace^k\zeta(k)}$ and
$\lim_{N\to\infty}\frac{I_{N,5}}N=\frac1{p\thinspace^{k+l}}D_{\text{nat}}(S_k)=\frac1{p\thinspace^{k+l}\zeta(k)}$.
Continuing this process of inclusion-exclusion,
we have
$$
I_N=\sum_{m=0}^\infty I_{N,4m+1}-\sum_{m=0}^\infty I_{N,4m+3},
$$
where for each $N$ only a finite number of the summands above are non-zero.
Now
$$
\lim_{N\to\infty}\frac{I_{N,4m+1}}N=\frac1{p\thinspace^{mk+l}\zeta(k)},\ m=0,1,\ldots,
$$
and
$$
\lim_{N\to\infty}\frac{I_{N,4m+3}}N=\frac1{p\thinspace^{(m+1)k}\zeta(k)},\ m=0,1,\ldots.
$$
From this we conclude that
$$
D_{\text{nat}}(\beta_p\ge l, S_k)=\frac1{\zeta(k)}\sum_{m=0}^\infty\frac1{p^{mk+l}}-
\frac1{\zeta(k)}\sum_{m=0}^\infty\frac1{p^{(m+1)k}}=\frac1{\zeta(k)}\frac{\frac1{p^l}-\frac1{p^k}}
{1-\frac1{p^k}}.
$$
Thus,
$D_{\text{nat}}(\beta_p\ge l|S_k)=\frac{\frac1{p^l}-\frac1{p^k}}
{1-\frac1{p^k}}$.
\hfill $\square$

\end{document}